 \newtheorem{thm}{Theorem}
 \newtheorem{cor}{Corollary}
 \newtheorem{defn}{Definition}
 \newtheorem{prop}{Propotition}
 \newtheorem{lem}{Lemma}
\theoremstyle{definition}
 \newtheorem{rem}{Remark}}
\theoremstyle{definition}
 \newtheorem{ack}{Acknowledgement}}
\begin{document}  

\title{Braid ordering and the geometry of closed braid}
\author{Tetsuya Ito}
\address{Graduate School of Mathematical Science, University of Tokyo, 3-8-1 Komaba, Meguro-ku, Tokyo, 153-8914, Japan}
\email{tetitoh@ms.u-tokyo.ac.jp}
\subjclass[2000]{Primary~57M25, Secondary~57M50}
\keywords{Braid groups; Dehornor ordering; essential surface; Braid foliation; Nielsen-Thurston classification}
 
\begin{abstract}
   The relationships between braid ordering and the geometry of its closure is studied. We prove that if an essential closed surface $F$ in the complements of closed braid has relatively small genus with respect to the Dehornoy floor of the braid, $F$ is circular-foliated in a sense of Birman-Menasco's Braid foliation theory. As an application of the result, we prove that if Dehornoy floor of braids are larger than three, Nielsen-Thurston classification of braids and the geometry of their closure's complements are in one-to-one correspondence. Using this result, we construct infinitely many hyperbolic knots explicitly from pseudo-Anosov element of mapping class groups.

\end{abstract}
\maketitle

\section{Introduction}
   In this paper we study an application of left-invariant total ordering of braid group $B_{n}$ to knot theory. An left-invariant total ordering, called Dehornoy ordering is an interesting structure of braid groups found in 1990's by Dehornoy (\cite{d}). Today the ordering is interpreted in both algebraic and geometric way and these interpretation shows the Dehornoy ordering is a quite natural structure of braid groups.

 Braid groups are very important tool for studying links. Many link invariants such as Jones polynomial are first constructed via closed braid representative of links, so it is natural to study an application of this interesting structure of braid groups to knot theory. In \cite{mn},\cite{ma} it is found that braid ordering can be seen as some kinds of restriction for admissibility of braid moves. Braid moves are moves of closed braids which do not change their representing link types. Most typical (and important) example of braid moves are {\it exchange move} or {\it destabilization, stabilization}, which is shown in figure \ref{braidmove}. These works suggest that there would be some relationships between Birman-Menasco's braid foliation theory and ordering of braid groups because braid moves appears naturally in braid foliation theory (\cite{bf}). For example, exchange move is used to make braid foliation simple and it is one of the most fundamental operation in Birman-Menasco's braid foliation theory. Indeed, we investigate relationship between braid foliation and left invariant total ordering of braid groups, and obtain following result:

\begin{figure}[htbp]
 \begin{center}
  \includegraphics[width=120mm]{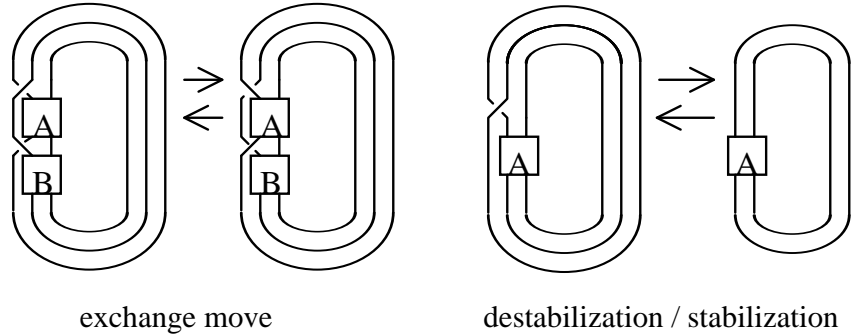}
 
 \end{center}
 \caption{exchange move, stabilization and destabilization}
 \label{braidmove}
\end{figure}

\begin{thm}
\label{thm:main1}
   Let $\beta \in B_{n}$ be a braid and $F$ be an essential, genus $g$ closed surface in the complement of closed braid $\widehat{\beta}$.
\begin{enumerate}
\item If $F$ is tiled, then $[\beta]_{D} < g+2$.
\item If $F$ is mixed-foliated, $[\beta]_{D} < 2g+1$.
\item If $[\beta]_{D} \geq 2g+1$, then $F$ is circular foliated.
\end{enumerate} 
\end{thm}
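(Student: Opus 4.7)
My first observation is that part (3) is essentially the contrapositive of the disjunction of (1) and (2): by Birman--Menasco's trichotomy, after a suitable isotopy any essential closed surface $F$ in the complement of a closed braid admits a braid foliation of exactly one of the three types, and for $g \geq 1$ one has $g+2 \leq 2g+1$, so $[\beta]_D \geq 2g+1$ simultaneously excludes ``tiled'' and ``mixed-foliated''. (The case $g=0$ is vacuous for essential closed surfaces.) Hence I would focus on establishing (1) and (2). The common setup places the braid axis $A$ in standard position and fibres $S^3 \setminus A$ by meridian half-discs $\{H_\theta\}_{\theta \in S^1}$; after a generic isotopy of $F$, the intersections $F \cap H_\theta$ cut $F$ into a singular foliation whose leaves are a-arcs (ending on $\widehat{\beta}$), b-arcs (ending on $A$), and circles, with finitely many saddle singularities.

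For part (1), my plan is to exploit the tile decomposition to exhibit an explicit braid word for $\beta$ whose first Dehornoy generator $\sigma_1$ is tightly controlled. When $F$ is tiled, the complementary regions of the singular leaves partition $F$ into polygons of the three standard types (aa, ab, bb), and $\chi(F)=2-2g$ bounds the total tile count. Travelling once around $A$ starting from a generic half-disc, each tile contributes a prescribed block of generators to a word representing $\beta$, and the technical step is to estimate the resulting $\sigma_1$-content --- equivalently, to locate a conjugate of $\beta$ strictly below $\Delta_1^{2(g+2)}$ in the Dehornoy order --- so as to conclude $[\beta]_D < g+2$.

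Part (2) follows a parallel route, but with the foliation on $F$ now split into a tiled subsurface and annular subregions foliated by circles. The circular regions foliate unknotted annuli threaded by $A$, each carrying an integer ``winding'' which, combined with the tile count of the tiled portion, supplies the factor $2$ in the bound $2g+1$. The hard part, I expect, is converting the combinatorial data (tile count, vertex count, number of annular regions) into a genuinely sharp Dehornoy-floor estimate, rather than a bound inflated by the number of saddles. I anticipate needing a preliminary lemma in the spirit of Malyutin's cocycle/quasi-morphism estimates that linearly relates $[\beta]_D$ to the combinatorics of the braid foliation, and a careful boundary-matching argument where the tiled and circular subregions meet --- the compatibility defect there should be absorbed into the extra $+1$ in $2g+1$. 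Once these two inputs are in hand, the Euler characteristic count delivers both (1) and (2) uniformly.
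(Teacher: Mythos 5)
Your reduction of (3) to (1) and (2) matches the paper, but the core of your plan for (1) and (2) has genuine gaps. First, for a \emph{closed} surface in the complement of a closed braid there are no a-arcs: every non-singular leaf is either a b-arc with both endpoints on the axis or a c-circle, so a tiled surface decomposes exclusively into bb-tiles (aa- and ab-tiles occur only for surfaces bounded by the braid, such as Seifert surfaces). Second, and more seriously, the Euler characteristic does \emph{not} bound the total tile count: the identity $\sum_{v=1}^{3}(4-v)V(v)+8g-8=\sum_{v=4}^{\infty}(v-4)V(v)$ leaves the number $V(4)$ of valence-four vertices, hence the number of tiles, completely unconstrained, so a word for $\beta$ assembled from all the tiles as you travel once around the axis can have arbitrarily large $\sigma_1$-content for fixed $g$. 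Your plan to bound $[\beta]_D$ by the $\sigma_1$-content of such a global word therefore cannot produce a bound depending only on $g$. What the Euler count actually delivers is a bound on the \emph{minimal valence} of a vertex (at most $2g+2$ in the tiled case), and the proof hinges on a quantitative lemma you have not supplied: if some vertex has valence $v$, then $[\beta]_D<\frac{v}{2}+1$ (Lemma \ref{lem:estimate-tiled}). This is proved by cutting $\beta$ into the sub-braidings between consecutive singular fibers through that single vertex, showing each sub-braiding is conjugate to a word containing only one $\sigma_1^{\pm1}$ --- which requires the conjugation-robust strengthening of the $\sigma_1$-count estimate (Proposition \ref{prop:newestimation}) --- and then applying subadditivity of the Dehornoy floor. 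You explicitly defer exactly this linear link between foliation combinatorics and $[\beta]_D$ to an unproven ``lemma in the spirit of Malyutin's estimates,'' so the central step of the argument is missing.

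For (2), your picture of a tiled part glued to circle-foliated annuli ignores the cc-singularities: a mixed foliation may contain cc-pants, and the decomposition into regions is then not a cellular decomposition, so no Euler-characteristic count applies directly. The paper deals with this by pairing bc-annuli into be-tiles when no cc-singularity is present (recovering the same Euler formula, Lemma \ref{lem:eulerform-nmixed}), and by an abstract ``preimage surgery'' along c-circles --- not realizable on the embedded surface --- which removes cc-singularities while dropping the genus at most $g$ times, after which the valence estimate is rerun and compared back to $F$; nothing in your outline plays this role, and the winding numbers of the circular annuli are never what produces the factor $2$ in $2g+1$. Finally, the case $g=0$ is not vacuous: essential spheres occur for split closed braids, and the paper disposes of this case separately using Birman--Menasco's split/composite link theorem rather than the valence argument.
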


  In above statements, $[\;]_{D}$ represents {\it Dehornoy floor} of $\beta$, which is a non-negative integer defined in section 2. This integer is determined by the Dehornoy ordering of braid groups, and efficiently computable. The notion of tiled, mixed foliated or circular foliated are derived from Birman-Menasco's braid foliation theory, which will be explained in section 3. Proof of Theorem \ref{thm:main1} will be given in section 4.

    Theorem \ref{thm:main1} means braid ordering gives some information about the geometry of closed braid complements, and especially if Dehornoy floor is relatively large with respect to its genus, then the surface can be positioned to special position, which is simple .

   We remark that Dehornoy floor might vary under even conjugation, stabilization or destabilization, so it is far from knot invariant. Indeed, Dehornoy floor of braids obtained by performing stabilization always vanish. Nevertheless theorem \ref{thm:main1} seems to suggests that braid ordering provide some important information about its closure which can not be obtained from known invariants of links.
   
   As an application of theorem \ref{thm:main1} we obtain a sufficient condition for Nielsen-Thurston classification determines the geometry of its closure's complement. Nielsen-Thurston theorem states braids are classified into three types by their dynamics; {\it periodic, reducible} and {\it pseudo-Anosov}. On the other hand, a famous Thurston's hyperbolization theorem shows knots are classified into following three types by the geometry of their complements; {\it Torus knots, Satellite knots,} and {\it Hyperbolic knots}. Although there are many similarities between these two classifications, in general there are no one-to-one correspondence of them. In figure \ref{fig:example}, we give an example of periodic, reducible and pseudo-Anosov braids whose closure represents $(2,3)$-Torus knot.
 Therefore the following question naturally arises:

{\it    When the Nielsen-Thurston classification of braids completely determine the geometrical types of its closure? }

   Theorem \ref{thm:main2} partially answers the question.
\begin{figure}[htbp]
 \begin{center}
  \includegraphics[width=60mm]{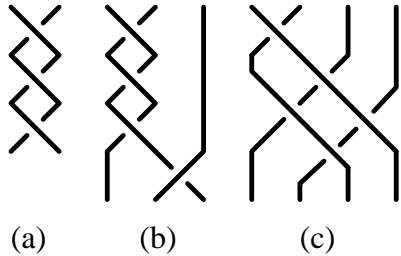}
 \end{center}
 \caption{Examples of periodic (a), pseudo-Anosov (b), reducible (c) braids all of whose closure are $(2,3)$-Torus knots}
 \label{fig:example}
\end{figure}

\begin{thm}
\label{thm:main2}
Let $\beta \in B_{n}$ be a braid whose closure is a knot.
 If $[\beta]_{D} \geq 3$, following holds:
 \begin{enumerate}
  \item $\beta$ is periodic if and only if $\widehat{\beta}$ is a torus knot.
  \item $\beta$ is reducible if and only if  $\widehat{\beta}$ is a satellite knot.
  \item $\beta$ is pseudo-Anosov if and only if $\widehat{\beta}$ is a hyperbolic knot.
 \end{enumerate}
 
 Especially, under this assumption, Satellite knots are all braided-satellite.
\end{thm}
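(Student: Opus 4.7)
The plan is to analyze the incompressible tori in $S^3\setminus\widehat{\beta}$ via Theorem \ref{thm:main1}(3). Taking $g=1$, the hypothesis $[\beta]_{D}\geq 3 = 2g+1$ guarantees that every essential torus in $S^3\setminus\widehat{\beta}$ is circular-foliated. The first step I would carry out is the translation from foliations to geometry: a circular-foliated torus $T$ in Markov position can be isotoped off the braid axis $A$, because circular foliation means every leaf of $H_{\theta}\cap T$ is a closed curve (no $a$-arcs), and $a$-arcs are precisely the arcs recording transverse intersections of $T$ with $A$. Intersecting $T$ with a fixed fiber disk $H_{\theta_{0}}$ then yields a system $\mathcal{C}$ of essential simple closed curves in $D_{n}$ that is setwise preserved by the braid monodromy, and the component of $S^{3}\setminus(A\cup T)$ not containing $A$ is a solid torus that realises $\widehat{\beta}$ as a braided satellite.

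With this translation in hand, I would prove statement (2) directly. The implication ``$\widehat{\beta}$ satellite $\Rightarrow$ $\beta$ reducible'' is immediate: take any essential companion torus, apply Theorem \ref{thm:main1}(3), and extract $\mathcal{C}$ as above; the same picture exhibits the satellite structure as braided. The converse requires that a reducing system $\mathcal{C}$ for a reducible $\beta$ trace out an \emph{essential} torus $T\subset S^{3}\setminus\widehat{\beta}$; here the only danger is that $T$ is compressible or boundary-parallel, but compression would produce a $2$-sphere contradicting the essentiality of $\mathcal{C}$ in $D_{n}$, and boundary-parallelism would force $\widehat{\beta}$ to be a cable of a particularly simple form whose Dehornoy floor is easily seen to vanish, contradicting $[\beta]_{D}\geq 3$.

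Statements (1) and (3) should then follow by an elimination argument using both trichotomies. Passing to the braid-axis link $L=\widehat{\beta}\cup A$, the complement $S^{3}\setminus L$ is the mapping torus of $\beta:D_{n}\to D_{n}$, whose Thurston-geometric type is dictated by the Nielsen--Thurston type of $\beta$. The torus analysis above shows that under our hypothesis essential tori in $S^{3}\setminus\widehat{\beta}$ correspond bijectively to essential tori in $S^{3}\setminus L$, so $\beta$ pseudo-Anosov forces $S^{3}\setminus\widehat{\beta}$ to be atoroidal; a parallel foliation argument applied to essential annuli (ruling out a Seifert-fibered piece when $[\beta]_{D}$ is large) then yields that $\widehat{\beta}$ is hyperbolic. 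Periodic braids have torus-link closures by construction, and conversely a torus-knot closure must correspond to a periodic braid once reducibility and pseudo-Anosov behaviour have been excluded by the previous two cases.

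The step I expect to be the main obstacle is the essential-annulus analysis needed to separate the torus-knot and hyperbolic geometric types, since Theorem \ref{thm:main1} is stated for closed surfaces; I anticipate this requires either an extension of the braid-foliation machinery to properly embedded annuli with boundary on $\widehat{\beta}$, or a direct argument exploiting the Seifert fibration of a torus-knot complement together with the incompatibility of such a fibration with large Dehornoy floor.
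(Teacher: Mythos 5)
Your treatment of statement (2) matches the paper: apply Theorem \ref{thm:main1}(3) with $g=1$ so that every essential torus is circular-foliated, hence disjoint from the axis and transverse to the fibers in circles, which simultaneously gives a reducing system for $\beta$ and exhibits the satellite as braided; and the elimination argument does correctly yield the direction ``$\widehat{\beta}$ hyperbolic $\Rightarrow$ $\beta$ pseudo-Anosov'' (hyperbolic excludes satellite, hence reducible by (2), and excludes torus knot, hence periodic since periodic braids close to torus links). The genuine gap is exactly the step you flag at the end but do not carry out: separating the periodic/torus-knot case from the pseudo-Anosov/hyperbolic case, i.e.\ proving ``$\widehat{\beta}$ a torus knot $\Rightarrow$ $\beta$ periodic'' (equivalently, that a pseudo-Anosov braid with $[\beta]_{D}\geq 3$ cannot close to a torus knot). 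Your sentence that a torus-knot closure must come from a periodic braid ``once reducibility and pseudo-Anosov behaviour have been excluded by the previous two cases'' is circular, since nothing in the previous cases excludes a pseudo-Anosov braid from closing to a torus knot; and your proposed substitutes (extending braid foliations to essential annuli with boundary on $\widehat{\beta}$, or a direct Seifert-fibration versus large-floor incompatibility) are not available from Theorem \ref{thm:main1}, which is stated only for closed surfaces, and are not supplied.

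The paper closes this gap with an external input you never invoke: Menasco's theorem that torus knots are exchange-reducible. If $\widehat{\beta}$ is a torus knot and $n$ is not the minimal braid index, the exchange-reducing sequence forces $\widehat{\beta}$ to admit a destabilization or exchange move, and Proposition \ref{prop:dehornoyfloor} then gives $[\beta]_{D}\leq 1$, contradicting $[\beta]_{D}\geq 3$; hence $n$ is the braid index and Menasco's theorem makes $\beta$ conjugate to $(\sigma_{1}\cdots\sigma_{n-1})^{q}$, so $\beta$ is periodic, and (1) and (3) follow by exclusivity of the two trichotomies. You would need this theorem (or a genuine replacement for it) to complete your argument; no annulus-foliation machinery is required in the paper's route. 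A smaller soft spot: in the reducible $\Rightarrow$ satellite direction your compressibility argument (``compression would produce a $2$-sphere contradicting essentiality of $\mathcal{C}$ in $D_{n}$'') is not convincing as stated, since a compressing disc for the swept torus need not respect the fibration --- e.g.\ the torus is compressible whenever the companion core is unknotted --- so ruling this out also needs the large Dehornoy floor, not just essentiality of the curves in the disc; the paper glosses this as trivial, but if you want to spell it out you must argue it differently.
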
 

   We recall that satellite knot $K$ is called braided satellite if $K$ admit closed braid representatives whose axis does not intersect some essential tori in the complement of $K$.  
   This theorem implies, under the condition $[\beta]_{D} \geq 3$, the Nielsen-Thurston classification of braid groups and the geometric classification of their closures are in one-to-one correspondence.  
   In \cite{l}, Los studied the relationship between these two classifications in the case knots are represented by closed braid with minimum braid index. Main result of \cite{l} shows under the condition of braid index, two classifications are related each other by performing exchange move. However, in general to determine braid index of given knots or links are very difficult question and to see whether or not given braids admit exchange move is also difficult, so the relationships between two classifications cannot be seen directly. In this regard, theorem \ref{thm:main2} gives more general and direct condition for the question.
   We will give a proof of theorem \ref{thm:main2} in section 5, and in this section we also give concrete method to create hyperbolic knots via Nielsen-Thurston theory.
 
\begin{ack}
 The author would like to thank his adviser Toshitake Kohno for many helpful suggestions and advices for preparing this manuscript. He also would like to thank Eiko Kin for useful conversations.
\end{ack}
 
\section{Braid ordering and Dehornoy floor}

   In this section we review some of fundamental facts of Dehornoy ordering and Dehornoy floor.
Braid group $B_{n}$ is, by definition, a group which is presented by
\[ \langle \sigma_{1},\cdots,\sigma_{n-1}| \sigma_{i}\sigma_{j}\sigma_{i}=\sigma_{j}\sigma_{i}\sigma_{j} \textrm{ if } |i-j|=1, \; \sigma_{i}\sigma_{j}=\sigma_{j}\sigma_{i} \textrm{ if } |i-j|\geq 2  \rangle .\]
 Geometrically, elements of braid group $\beta \in B_{n}$ are represented by braided $n$-strands which move from above to below, oriented downward. By connecting their endpoints of strands canonically, we obtain an oriented link. We denote this link by $\widehat{\beta}$ and call {\it closed braid} or {\it closure} of $\beta$.

   We call the braid $(\sigma_{1}\sigma_{2}\cdots\sigma_{n-1})(\sigma_{1}\sigma_{2}\cdots\sigma_{n-2})\cdots(\sigma_{1}\sigma_{2})(\sigma_{1})$ {\it Garside's fundamental braid} and denote it by $\Delta$. The Garside fundamental braid$\Delta$ plays an important role in braid groups. 
 It is known that $\Delta^{2}$ generates the center of $B_{n}$, which is an infinite cyclic group.
 For these basic facts about braid group, see \cite{b}.
 
\subsection{Dehornoy ordering, Dehornoy floor}

   In this section we give a brief exposition of left-invariant total ordering of braid groups called Dehornoy ordering, and define Dehornoy floor. Dehornoy floor is first introduced in \cite{mn}, though they do not use term Dehornoy floor. This is a measure of how far braid is from the identity elements with respect to Dehornoy ordering and can be seen as some kinds of complexity of braids. 
We describe fundamental results about Dehornoy floor, proved in \cite{mn},\cite{ma}. 

   A braid $\beta \in B_{n}$ is called $\sigma$-$positive$ if $\beta$ can be represented by a word which contains at least one $\sigma_{i}$, and contains no $\sigma_{1}^{\pm1},\sigma_{2}^{\pm1}\cdots,\sigma_{i-1}^{\pm 1},\sigma_{i}^{-1}$ for some $1\leq i \leq n-1$. 
we say $\alpha < \beta$ is true if and only if the braid $\alpha^{-1}\beta$ is $\sigma$-positive.
It is known that the relation $<$ defines left-invariant total ordering of $B_{n}$; That is, $<$ is total ordering and if braids $\alpha,\beta \in B_{n}$ satisfy $\alpha < \beta $ , then for all braid $\gamma \in B_{n}$, $\gamma \alpha < \gamma \beta$ holds.  
We call this ordering {\it Dehornoy ordering}. 
   Dehornoy ordering has following nice property, called {\it property $S$} or {\it Subword property}.
   
\begin{prop}[Property S (\cite{ddrw})]
\label{prop:Property-S}
For any braids $\beta_{1},\beta_{2} \in B_{n}$ and $1\leq i \leq n-1$,
\[ \beta_{1}\sigma_{i}\beta_{2} > \beta_{1}\beta_{2} > \beta_{1}\sigma_{i}^{-1}\sigma_{2} \]
holds. 
\end{prop}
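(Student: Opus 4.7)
The plan is to use the left-invariance of $<$ to collapse the double inequality to a single conjugation-positivity claim, then to apply Dehornoy's trichotomy theorem to narrow the analysis to two alternatives, and finally to exclude one of those alternatives by induction combined with either an algorithmic or a geometric argument.

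First, the relation $<$ is left-invariant directly from its definition: since $(\gamma\alpha)^{-1}(\gamma\beta)=\alpha^{-1}\beta$, whether this element is $\sigma$-positive is unaffected by left multiplication by any $\gamma$. Cancelling $\beta_1$ on the left in both inequalities of Property S reduces the statement to $\sigma_i\beta_2>\beta_2>\sigma_i^{-1}\beta_2$. Unpacking the definition of $<$, both of these inequalities are equivalent to the single claim: for every $\beta\in B_n$ and every $1\le i\le n-1$, the conjugate $\beta^{-1}\sigma_i\beta$ is $\sigma$-positive.

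Since $\sigma_i$ is non-trivial, so is its conjugate $\beta^{-1}\sigma_i\beta$. By Dehornoy's fundamental trichotomy theorem (Property A together with Property C, as presented in \cite{ddrw}), every non-trivial braid admits either a $\sigma$-positive or a $\sigma$-negative word representative, and the two possibilities are mutually exclusive. Consequently $\beta^{-1}\sigma_i\beta$ is one or the other, and the remaining task is to exclude the $\sigma$-negative alternative.

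I would attempt this exclusion by induction on the word length of $\beta$. The base case $\beta=1$ yields $\sigma_i$, which is $\sigma_i$-positive by inspection. For the inductive step, factor $\beta=\sigma_j^{\varepsilon}\beta'$, which yields $\beta^{-1}\sigma_i\beta=(\beta')^{-1}\bigl(\sigma_j^{-\varepsilon}\sigma_i\sigma_j^{\varepsilon}\bigr)\beta'$. In the cases $|i-j|\ge 2$ and $i=j$, the inner factor simplifies to $\sigma_i$ by commutativity or cancellation, and the inductive hypothesis for $\beta'$ concludes. The genuine obstacle is the case $|i-j|=1$: here the braid relation gives, for instance, $\sigma_{i+1}^{-1}\sigma_i\sigma_{i+1}=\sigma_i\sigma_{i+1}\sigma_i^{-1}$, a three-letter word rather than a single generator, and the induction does not close by elementary word manipulations alone (splitting the three-letter factor into three conjugates produces a $\sigma$-negative factor, and in general the positive cone of the Dehornoy order is not closed under conjugation, only the class of conjugates of generators). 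To surmount the obstacle I would appeal either to Dehornoy's handle reduction algorithm, which rewrites any braid word into $\sigma$-positive, $\sigma$-negative, or trivial form and can be shown to produce a $\sigma$-positive word for every conjugate of a generator, or to the geometric description of the Dehornoy order via its action on a reference arc in the punctured disk, in which $\beta^{-1}\sigma_i\beta$ is recognised as the positive half-twist about the arc $\beta(\alpha_i)$ and any such half-twist is directly seen to be $\sigma$-positive.
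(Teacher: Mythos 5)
The paper offers no proof of this proposition to compare against: Property S is quoted as a known fact with the reference \cite{ddrw}, and the geometric form of the ordering that one would use to prove it is likewise only quoted from \cite{fgrrw} in Section 2.2. Judged on its own, the first part of your argument is correct: left-invariance is immediate from the definition of $<$, cancelling $\beta_1$ is legitimate, and both inequalities collapse (note the paper's statement has a typo, $\sigma_2$ where $\beta_2$ is meant) to the single claim that $\beta^{-1}\sigma_i\beta$ is $\sigma$-positive for every $\beta\in B_n$ and every $i$. Your observation that the positive cone of the Dehornoy order is not closed under conjugation, so that the naive induction on word length genuinely breaks in the case $|i-j|=1$, is also accurate.

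But that is exactly where the proof stops, and what remains is not a technical detail: under left-invariance, ``every conjugate of a generator is $\sigma$-positive'' is \emph{equivalent} to Property S, so after your reduction the entire content of the proposition is still unproved, and the trichotomy (Properties A and C) cannot close it --- it only says the conjugate has a well-defined sign, not which one. Neither proposed remedy is executed. Handle reduction decides the sign of a given word, but the assertion that it ``can be shown to produce a $\sigma$-positive word for every conjugate of a generator'' is precisely the claim at issue and is given no argument. In the geometric picture, identifying $\beta^{-1}\sigma_i\beta$ with a positive half-twist about the image of the standard arc is the right idea, but ``directly seen to be $\sigma$-positive'' conceals the real work: one must put the twisted arc and the reference graph $\Gamma$ in minimal position and verify that the first divergence of the image of $\Gamma$ is to the left, which is essentially the argument carried out in \cite{fgrrw} and \cite{ddrw}. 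As written, your proposal is an accurate road map whose decisive step is asserted rather than proved; either write out that geometric verification or cite it, as the paper does.
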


   As we mentioned at introduction, there are many other interpretation and definition of Dehornoy ordering. See \cite{ddrw} for those other definitions and meanings. We will describe one of the geometrical definition later.  
Using Dehornoy ordering, we define Dehornoy floor of braids $[\beta]_{D}$ as follows.

\begin{defn}
  Dehornoy floor $[\beta]_{D}$ of braid $\beta \in B_{n}$ is a minimal non-negative integer $m$ which satisfies
  $\beta \in (\Delta^{-2m-2},\Delta^{2m+2})$, where $(\Delta^{-2m-2},\Delta^{2m+2})= \{ \alpha \in B_{n} \;| \; \Delta^{-2m-2} < \alpha  <\Delta^{2m+2} \}$. 
\end{defn}

   Now we remark one important facts.
   We must be careful that when we think about Dehornoy floor, which $B_{n}$ a braid $\beta$ belongs to is very important.
For example, for a braid $\beta = (\sigma_{1}\sigma_{2})^{4}$, $[\beta]_{D}=1$ if we consider $\beta \in B_{3}$, and $[\beta]_{D}=0$ if we consider $\beta \in B_{4}$. Fortunately, if we use braid groups to describe links, the number of strands are always implicit, so there might be no confusion about the number of braid strands. So in most cases, we omit to write which $B_{n}$ a braid $\beta$ belongs to. 
   we also remark that Dehornoy floor is not a conjugacy invariant of braids. For example, 3-braid $\Delta^{2}\sigma_{1}\sigma_{2}^{-1}$ has Dehornoy floor $1$ but its conjugation $\Delta^{-1}(\Delta^{2}\sigma_{1}\sigma_{2}^{-1})\Delta = \Delta^{2}\sigma_{2}\sigma_{1}^{-1}$ has zero Dehornoy floor. This is mainly because Dehornoy ordering is not right-invariant; That is, it does not hold that for every $\gamma \in B_{n}$, $\alpha\gamma < \beta\gamma$ whenever $\alpha < \beta$. Thus, Dehornoy floor is far from knot invariant.  
   
   The most important property of Dehornor floor is that it can be seen as some kinds of restriction for admissibility of braid moves. We summarize the basic properties of Dehornoy floor, all of which are proved in \cite{mn}, or \cite{ma}.

\begin{prop}[\cite{mn},\cite{ma}]
\label{prop:dehornoyfloor}
   Let $\alpha,\beta \in B_{n}$. Then following holds.
\begin{enumerate}
\item If braid $\beta \in B_{n}$ is represented by braid word which contains $s$ occurrence of $\sigma_{1}$ and $k$ occurrence of $\sigma_{1}^{-1}$, then $[\beta]_{D} < max\{s,k\}$.
\item $|[\beta]_{D}-[\beta']_{D}|\leq 1$ if $\beta$ and $\beta'$ are conjugate.
\item $[\alpha \beta]_{D} \leq 1+[\alpha]_{D}+[\beta]_{D}$.
\item If closure of braid $\beta \in B_{n}$ admit destabilization or exchange move, $[\beta]_{D} \leq 1$.
\item For every $n$, there exist positive integer $r(n)$ such that for every $\beta \in B_{n}$, if $[\beta]_{D}>r(n)$ then $\widehat{\beta}$ is the unique closed $n$-braid representative of link types represented by its closure. That is, if $\widehat{\beta} = \widehat{\beta'}$ holds for some $\beta' \in B_{n}$, $\beta$ and $\beta'$ are conjugate.
\end{enumerate}
\end{prop}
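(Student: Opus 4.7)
The plan is to prove the five items in the order (3), (2), (1), (4), (5), since each later assertion leverages the earlier ones. The only available ordering-theoretic tools are the subword property (Proposition~\ref{prop:Property-S}) and the centrality of $\Delta^2$ in $B_n$, and virtually every step reduces to one of these.

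Item (3) is the cleanest: if $a=[\alpha]_D$ and $b=[\beta]_D$, then $\alpha<\Delta^{2a+2}$ and $\beta<\Delta^{2b+2}$, so two applications of left invariance together with centrality of $\Delta^2$ yield
\[
\alpha\beta<\alpha\Delta^{2b+2}=\Delta^{2b+2}\alpha<\Delta^{2b+2}\Delta^{2a+2}=\Delta^{2(a+b+1)+2},
\]
and a symmetric lower bound gives $[\alpha\beta]_D\le a+b+1$. Item (2) is more subtle because the ordering is not right invariant; the strategy is to first establish that conjugation by a single generator $\sigma_i^{\pm 1}$ shifts the floor by at most one — a Property~S calculation using the centrality of $\Delta^2$ to commute factors past $\Delta^{\pm 2}$-bands — and then absorb the accumulated shift for a general $\gamma$ into a single band using (3). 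For (1), the key lemma is that any braid word $w$ containing $s$ occurrences of $\sigma_1^{+1}$ and $k$ occurrences of $\sigma_1^{-1}$ satisfies $\Delta^{-2\max\{s,k\}}<w<\Delta^{2\max\{s,k\}}$; this is proved by peeling off $\sigma_1^{\pm 1}$ letters one at a time, bounding each peeled factor by $\Delta^{\pm 2}$ via Property~S, and invoking (3) to combine the pieces.

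For (4), destabilization means $\beta$ is conjugate to $\alpha\sigma_{n-1}^{\pm 1}$ with $\alpha$ a word in $\sigma_1,\ldots,\sigma_{n-2}$, and a direct Property~S argument on this bounded-complexity form yields the floor bound; the exchange move has a similarly explicit presentation, and (2) is then used to absorb the passage from $\beta$ to its convenient conjugate while (3) combines the factors. The genuine obstacle is (5): one invokes a refined Markov theorem which asserts that $\widehat{\beta}=\widehat{\beta'}$ with both braids in $B_n$ forces $\beta$ and $\beta'$ to be related by a finite sequence of conjugations and exchange moves within $B_n$, and then one combines (2) and (4) to show that any exchange-move step requires the floor of the current braid to be $\le 1$ while any conjugation shifts the floor by at most one; accounting carefully for the cumulative effect yields a threshold $r(n)$ beyond which no nontrivial such sequence can exist, forcing $\beta$ and $\beta'$ to be conjugate. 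Making $r(n)$ explicit and handling all possible interleavings of conjugations and exchange moves is the technical heart of the argument in \cite{mn},\cite{ma}, and is where the most care is required.
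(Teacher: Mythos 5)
You should first note that the paper does not prove this proposition at all --- it is quoted from \cite{mn} and \cite{ma} --- so the comparison is with those arguments. Your item (3) is correct, but item (2) has a genuine gap. Your plan (show that conjugation by a single $\sigma_i^{\pm1}$ shifts the floor by at most one, then ``absorb the accumulated shift for a general $\gamma$ into a single band using (3)'') cannot work: even granting the single-generator claim (which itself does not follow from Property S together with (3) --- a naive application of (3) gives a shift of $2$ per letter), iterating over the letters of $\gamma$ accumulates an error proportional to the word length of $\gamma$, and (3) offers no mechanism for collapsing that accumulation back to $1$. The whole content of (2) is one nontrivial lemma: if $\beta>\Delta^{2}$ then \emph{every} conjugate $\gamma^{-1}\beta\gamma$ is $>1$ (equivalently, if $\beta<\Delta^{-2}$ then every conjugate is $<1$); combined with centrality of $\Delta^{2}$ and left-invariance this handles an arbitrary conjugator in a single step. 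That lemma is exactly what \cite{mn},\cite{ma} prove (it is also visible in the geometric winding description of Section 2.2), and it is missing from your sketch.

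Item (5) rests on a false premise: there is no ``refined Markov theorem'' saying that two $n$-braids with the same closure are related by conjugations and exchange moves alone; indeed, with that premise your own bookkeeping would show that $[\beta]_D\geq 3$ already forbids any exchange move, yielding a bound independent of $n$, whereas the true threshold $r(n)$ depends on $n$ and is not explicit. The actual proof uses Birman--Menasco's Markov Theorem Without Stabilization: for each $n$ there is a \emph{finite} list of templates relating same-index representatives, and each template forces a bound on the Dehornoy floor of any braid it carries; $r(n)$ is the maximum over this finite, $n$-dependent list. Two smaller problems: in (1), splitting at all $\sigma_1^{\pm1}$ letters and recombining with (3) only yields $[\beta]_D\leq s+k-1$, which is weaker than $[\beta]_D<\max\{s,k\}$; you must argue the two bounds one-sidedly (delete all $\sigma_1^{-1}$ by Property S for the upper bound, all $\sigma_1$ for the lower, then push $\Delta^{2}$-bands through by centrality), and the claim that a word with a single $\sigma_1$ and no $\sigma_1^{-1}$ lies below $\Delta^{2}$ needs more than Property S (compare the argument in Proposition \ref{prop:newestimation}). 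In (4), a braid $\alpha\sigma_{n-1}^{\pm1}$ with $\alpha\in B_{n-1}$ may contain arbitrarily many $\sigma_1^{\pm1}$, so no ``direct Property S argument'' applies; one needs either the $\sigma_{n-1}$-analogue of (1) together with its relation to the $\sigma_1$-floor, or conjugate normal forms containing a single $\sigma_1^{\pm1}$.
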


   We remark that Dehornoy floor is not a conjugacy invariant but proposition \ref{prop:dehornoyfloor} (2) shows taking conjugate changes Dehornoy floor at most by one.

\begin{rem}
   Our definition of Dehornoy floor is slightly different from one in \cite{ma}. Our definition is rather after \cite{mn}. We adopt this definition because for application of knot theory, this formulation seems to be more natural and makes statement of results simple. 
\end{rem}

\begin{rem}
There exist an infinite family of left-invariant total ordering of braid groups, containing Dehornoy ordering, called {\it Thurston-type ordering} (See \cite{ddrw} or \cite{sw}). Most of results in this paper can be stated by using Thurston-type ordering instead of using Dehornoy ordering, especially using "Thurston floor" defined by the similar way.
\end{rem}

\subsection{The geometric meaning of Dehornoy ordering and Dehornoy floor}

  Now we give a geometric description of Dehornor ordering given in \cite{fgrrw}. Let $\Gamma$ be a graph in $D_{n}$ consisting of horizontal line which connects puncture points. See figure \ref{fig:braidorder_meaning}. 
We identify braid groups as a relative mapping class group $MCG(D_{n},\partial D_{n})$, which is a group of isotopy classes of homeomorphisms of $D_{n}$ whose restriction to boundary $\partial D_{n}$ is an identity. For braid $\alpha, \beta$, consider the image of graph $\Gamma$. By performing isotopy, we can assume $\alpha(\Gamma)$ and $\beta(\Gamma)$ have minimal intersections. 
If $\alpha \neq \beta$, these graphs must be different, so at some puncture point two graphs will diverge. We say $\beta >_{geom} \alpha$ if at divergence point, $\beta(\Gamma)$ moves left side of $\alpha(\Gamma)$. See figure \ref{fig:braidorder_meaning}.
In \cite{fgrrw} it is proved that this relation $>_{geom}$ coincide with Dehornoy ordering $>$.

Since $\Delta^{2}$ corresponds to rotation of the disc $D_{n}$, Dehornoy floor of $\beta$ is the number of how many times the initial segment of graph $\beta(\Gamma)$ winding around $\partial D_{n}$ under neglecting orientation.

\begin{figure}[htbp]
 \begin{center}
  \includegraphics[width=100mm]{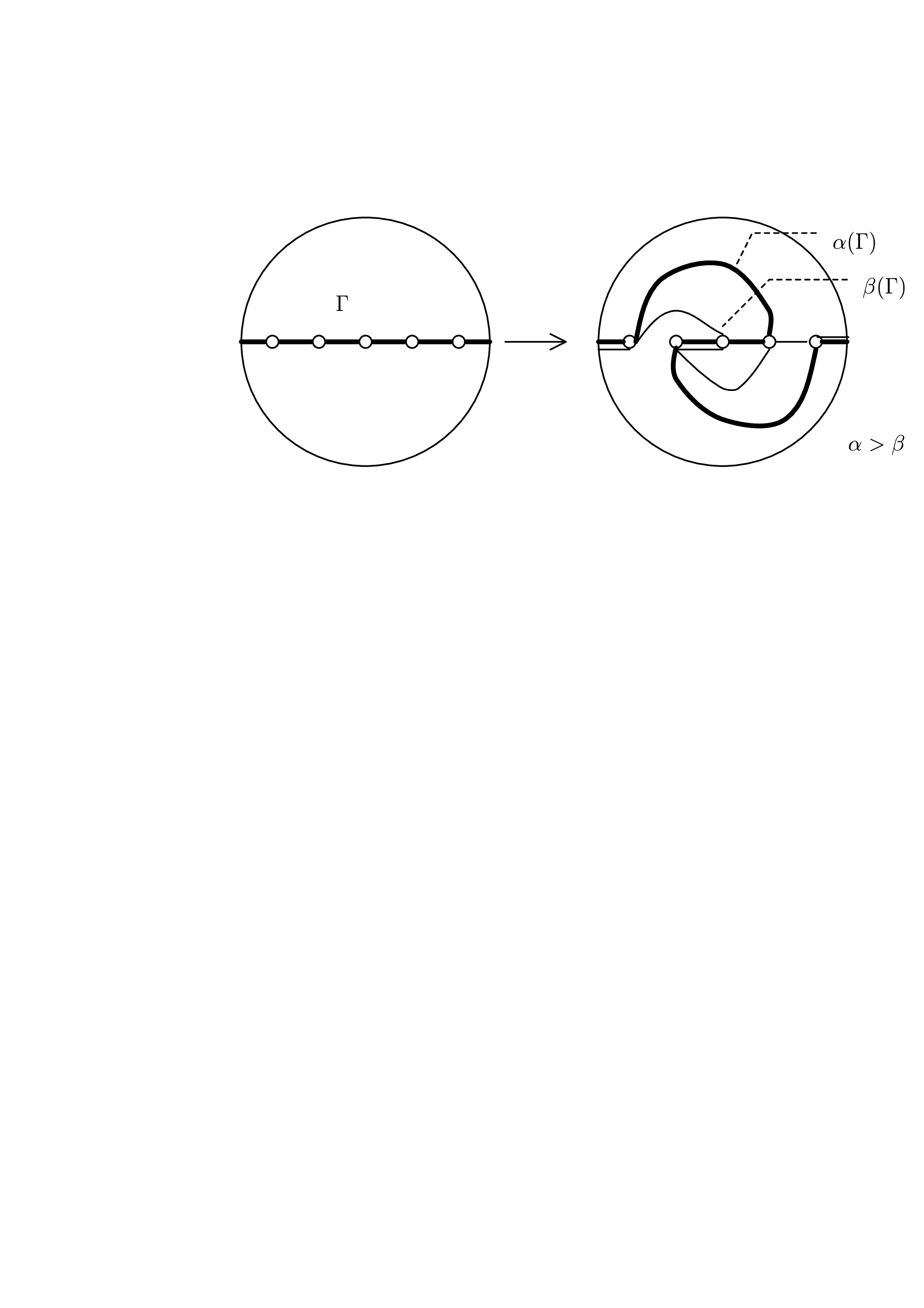}
 \end{center}
 \caption{Graph $\Gamma$ and examples}
 \label{fig:braidorder_meaning}
\end{figure}

\subsection{More precious estimation of Dehornoy floor}
   For later use, we prove slightly more precious estimation of Dehornoy floor than proposition \ref{prop:dehornoyfloor} (1).  
\begin{prop}
\label{prop:newestimation}
If a braid $\beta$ is conjugate to a braid $\alpha$ which is represented by braid word which contains $s$ occurrence of $\sigma_{1}$ and $k$ occurrence of $\sigma_{1}^{-1}$, then $[\beta]_{D} < max\{s,k\}$.
\end{prop}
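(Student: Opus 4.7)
Setting $M := \max\{s,k\}$ and writing $\beta = \gamma^{-1}\alpha\gamma$ for some $\gamma \in B_{n}$, my strategy is to sharpen the naive combination of Proposition~\ref{prop:dehornoyfloor}(1) and (2). Part (1) applied to the given word for $\alpha$ gives $[\alpha]_{D} \leq M-1$, and part (2) then yields $[\beta]_{D}\leq [\alpha]_{D}+1\leq M$, leaving a one-unit gap to the desired strict bound $[\beta]_{D}<M$.

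The plan for closing this gap is to translate the desired bound into an equivalent $\sigma$-positivity statement and exploit centrality of $\Delta^{2}$. Explicitly, $[\beta]_{D}<M$ is equivalent to $\Delta^{-2M}<\beta<\Delta^{2M}$, i.e., to both $\Delta^{2M}\beta$ and $\Delta^{2M}\beta^{-1}$ being $\sigma$-positive. Centrality gives $\Delta^{2M}\beta^{\pm 1}=\gamma^{-1}\bigl(\Delta^{2M}\alpha^{\pm 1}\bigr)\gamma$, and the braids $\Delta^{2M}\alpha^{\pm 1}$ are already $\sigma$-positive by part~(1) applied to $\alpha$. Thus the problem reduces to showing that conjugation by $\gamma$ preserves the $\sigma$-positivity of these particular braids, whose word structure is tightly constrained by the hypothesis.

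A complementary perspective comes from the geometric description of $[\beta]_{D}$ in Section~2.2 as the winding number of the initial segment of $\beta(\Gamma)$ around $\partial D_{n}$. Since $\gamma$ is the identity on $\partial D_{n}$, the identity $\gamma(\beta(\Gamma))=\alpha(\gamma(\Gamma))$ gives that $[\beta]_{D}$ equals the winding around $\partial D_{n}$ of the initial segment of $\alpha(\gamma(\Gamma))$. The plan is then to bound this winding by $M-1$ by carefully tracking how each letter of $\alpha$'s word acts on the initial segment of the arc system $\gamma(\Gamma)$: the $s + k$ occurrences of $\sigma_{1}^{\pm 1}$ should be the only letters that contribute decisively to the winding about $\partial D_{n}$, and a separate accounting of positive and negative occurrences would give the bound $\max\{s,k\}-1$ with strict inequality coming from the same arithmetic as in part~(1).

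The main obstacle is justifying the ``decisive contribution'' claim precisely, since the arc system $\gamma(\Gamma)$ and the intermediate action of letters in $\alpha$'s word can interact in complex ways; naively, one cannot say that only $\sigma_{1}^{\pm 1}$ letters contribute to $\partial D_{n}$-winding, because the full twist $\Delta^{2}$ uses all generators to accumulate one unit of winding. I would therefore carry out the argument by induction on the length of $\alpha$'s word, at each step tracking the initial segment under the current letter and using Property~S together with centrality of $\Delta^{2}$ to verify that the strict bound $M-1$ persists through the induction, mirroring the proof of Proposition~\ref{prop:dehornoyfloor}(1) while absorbing the extra conjugation by $\gamma$.
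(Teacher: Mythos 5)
Your reduction in the second paragraph stops exactly where the real work begins: after writing $\Delta^{2M}\beta^{\pm 1}=\gamma^{-1}(\Delta^{2M}\alpha^{\pm 1})\gamma$ you still have to show that conjugation by $\gamma$ preserves $\sigma$-positivity of these elements, and that is the whole content of the proposition, not a smaller step. Since the Dehornoy ordering is not right-invariant, $\sigma$-positivity is not preserved by conjugation in general (e.g.\ $\sigma_{1}\sigma_{2}^{-1}$ is $\sigma$-positive while its conjugate $\Delta^{-1}\sigma_{1}\sigma_{2}^{-1}\Delta=\sigma_{2}\sigma_{1}^{-1}$ is $\sigma$-negative), so you cannot appeal to any general principle here; you would need an argument specific to the word structure, which is precisely what is missing. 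Worse, the geometric route you propose to supply it rests on a false claim: from $\gamma(\beta(\Gamma))=\alpha(\gamma(\Gamma))$ you assert that $[\beta]_{D}$ equals the winding of the initial segment of $\alpha(\gamma(\Gamma))$, but the graph $\alpha(\gamma(\Gamma))=(\gamma\beta)(\Gamma)$ computes the floor of $\gamma\beta$, not of $\beta$; a boundary-fixing homeomorphism can change the winding of an arc (take $\beta=\alpha=1$, $\gamma=\Delta^{2}$: then $[\beta]_{D}=0$ while $\alpha(\gamma(\Gamma))=\Delta^{2}(\Gamma)$ winds once). So the ``complementary perspective'' does not close the one-unit gap, and your planned induction is only an intention to mirror Proposition \ref{prop:dehornoyfloor}(1), whose proof is cited from the literature rather than available to mirror.

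The idea you are missing is how the paper neutralizes the conjugation. Writing $\beta=\gamma^{-1}(\sigma_{1}W_{1}\sigma_{1}\cdots W_{k})\gamma$ with each $W_{i}$ free of $\sigma_{1}^{\pm1}$, one uses Property S (Proposition \ref{prop:Property-S}) to insert letters and replace every $W_{i}$ by an arbitrarily large power $\Delta'^{2p}$ of the sub-full-twist $\Delta'=(\sigma_{2}\cdots\sigma_{n-1})^{n-1}$, producing $\beta'\geq\beta$ of a completely explicit form. Then, instead of comparing $\beta'$ with $\Delta^{2k}$ after conjugating back, one proves the inequality with $\gamma$ still attached on the right: $\sigma_{1}\Delta'^{2p}\gamma<\Delta^{2}\gamma$, by comparing the two graphs $\sigma_{1}\Delta'^{2p}\gamma(\Gamma)$ and $\Delta^{2}\gamma(\Gamma)$ directly --- the boundary rotation $\Delta^{2}$ beats any amount of twisting inside the subdisc missing the first puncture, whatever $\gamma(\Gamma)$ looks like. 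Induction on $k$ gives $(\sigma_{1}\Delta'^{2p}\cdots\Delta'^{2p})\gamma<\Delta^{2k}\gamma$, and only now does one conjugate back, using left-invariance together with centrality of $\Delta^{2}$, to get $\beta\leq\beta'<\Delta^{2k}$. In other words, the proof never needs conjugation-invariance of positivity; it proves the single right-multiplied comparison that is actually required. Your write-up identifies the obstacle but does not overcome it, so as it stands the argument has a genuine gap.
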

\begin{proof}
   We show $\beta < \Delta^{2k}$. The proof of $\Delta^{-2s}< \beta$ is identical. By assumption, we can write $\beta = \gamma^{-1}(W_{0}\sigma_{1}W_{1}\sigma_{1}\cdots W_{k})\gamma$ where $W_{i}$ is a braid word which contains no $\sigma_{1}$ and $\gamma \in B_{n}$. We remark that we can assume $W_{0}$ is empty because we can amalgamate $W_{0}$ into conjugating element $\gamma$.
 
   Let $\Delta' = (\sigma_{2}\sigma_{3}\cdots \sigma_{n-1})^{n-1}$. 
From property S, we can find a braid $\beta' = \gamma^{-1}(\sigma_{1}\Delta'^{2p}_{(1)}\sigma_{1}\cdots \Delta'^{2p}_{(k)})\gamma > \beta$ by inserting $\sigma_{1},\cdots \sigma_{n-1}$ to braid $\beta$, for sufficiently large $p>0$. We show $\beta' <\Delta^{2k} $, that is equivalent to an inequality
$ (\sigma_{1}\Delta'^{2p}_{(1)}\sigma_{1}\cdots \Delta'^{2p}_{(k)})\gamma < \Delta^{2k} \gamma$. 
First we consider the case $k=1$. We use the geometric interpretation of Dehornoy ordering. Let $D'$ be a sub-disc of $D_{n}$ which contains $2,\cdots n$-th puncture of $D_{n}$.
Since $\Delta'^{2p}$ is p-times twists around $\partial D'$ and $\Delta^{2}$ is a rotation along $\partial D_{n}$, we conclude that the image of $\Delta^{2}\gamma(\Gamma)$ moves more left than $\sigma_{1}\Delta'^{2p}\gamma(\Gamma)$ for all $p$, thus $\sigma_{1}\Delta'^{2p}\gamma <\Delta^{2}\gamma$. Induction about $k$ gives desired result.
\end{proof}

   This proposition gives more accurate estimation of Dehornoy floor than using lemma \ref{prop:dehornoyfloor} (1).
\begin{rem}
   Proposition \ref{prop:newestimation} makes results in \cite{mn} a bit stronger form because in \cite{mn} they use estimation via lemma \ref{prop:dehornoyfloor} (1). Especially, we conclude that if $\widehat{\beta}$ admit destabilization or exchange move, then $[\beta_{D}]=0$.
\end{rem}

\section{Braid Foliation}

   In this section, we summarize basic machinery of Birman-Menasco's braid foliation theory in case of an embedded essential surface. For details of these techniques, see \cite{bf} or \cite{bm2}.

   Fix an unknot $A \in S^{3}$, called {\it axis} and choose a meridian fibration $H = \{ H_{\theta}| \theta \in [0,2\pi]\}$ of the solid torus $S^{3} \backslash A$.
A link $L$ in $S^{3} \backslash A$ is called {\it closed braid} with axis $A$ if $L$ intersects every fiber $H_{\theta}$ transversely and each fiber is oriented so that all intersections of $L$ are positive. It is easy to see closed braid $L$ intersects every fiber $H_{\theta}$ in the same number of points, and we call this number {\it braid index} of $L$. 
Note that closed braids $\widehat{\beta}$ obtained by a braid $\beta\in B_{n}$ as described in section 2 is indeed closed braid with $z-$axis and braid index is $n$. Conversely, if we cut solid torus $S^{3}\backslash A$ along the fiber $H_{0}$, we obtain a braid $\beta$. It is known that isotopy in $S^{3}\backslash A$ does not change conjugacy class of $\beta$ and if the isotopy fixes $H_{0}\cap \widehat{\beta}$, obtained braid $\beta$ does not change as a element of braid group though geometrical configuration of strands are varied.  

   A closed embedded surface $F$ in $S^{3} \backslash A$ is called {\it essential} if $F$ is incompressible and non-boundary parallel. 
Let $F$ be a essential closed surface in $S^{3} \backslash L$. Then the intersections of fiber $\{H_{\theta}\}$ with $F$ induce a singular foliation of $F$. Leaves of this foliation are connected components of intersection with fiber. Braid foliation techniques are, in short, changing this foliation simpler as possible and obtain standard and simple position of the surface and braids. 
By the argument in \cite{bf}, $F$ can be isotoped to "general" position with respect to the fibration satisfying:
\begin{description}
\item[(1)] Axis $A$ pierces $F$ transversely in finitely many points.
\item[(2)] For each point $v \in A\cap F$, there exists neighborhood $N_{v}$ of $v$ such that $F\cap N_{v}$ is radially foliated disc.
\item[(3)] All but finitely many fibers $H_{\theta}$ intersect $F$ transversely, and each of the exceptional fiber is tangent to $F$ exactly one point. Moreover, each point of tangency is saddle tangency and lies in the interior of $F \cap H_{\theta}$.
\end{description}

   Notice that the condition above is a bit stronger than usual general position arguments, since usual general position arguments only tell us that each tangency is local minimum, maximum, or saddle. In fact, this strong sense of general position is achieved from usual general position by deleting local minimum or maximum tangency.

   We say fiber $H_{\theta}$ {\it regular} if $H_{\theta}$ transverse $F$ and {\it singular}
if $H_{\theta}$ tangent to $F$. 
In the foliation of $F$, every non-singular leaf of induced foliation is either simple closed curve or arc which has both endpoints on axis $A$. We call former {\it c-circle} and the latter {\it b-arc}. Each b-arc in a fiber $H_{\theta}$ separates $H_{\theta}$ to two components, and we say b-arc is {\it essential} if both of these components are pierced at least once by $L$. For c-circle in a fiber $H_{\theta}$, c-circle bound a disc $D$ in $H_{\theta}$ and we say c-circle is {\it essential} if $D$ is pierced at least once by $L$.
As shown in \cite{bf}, we can assume further more condition about foliation of $F$.
\begin{description}
\item[(4)] every fiber is essential. 
\item[(5)] every c-circle is homotopically non-trivial.
\end{description}

   This condition is achieved by deleting all inessential leaves or homotopically trivial c-circle. From now on, we always assume that $F$ satisfies condition (1)$-$(5).

   We call an intersection point of $A$ with $F$ {\it vertex}. For each vertex $p$, the valance of vertex $p$ is, by definition, the number of singular leaves which pass $p$. 
We say singular point is $bb-singularity$ if the singular point derived from two b-arcs.
$bc-singularity$ and $cc-singularity$ are defined by the same way. Each type of singularity has a neighborhood in $F$ as shown in figure \ref{fig:regions}, and we say such a neighborhood {\it regions}. 

   A foliation of $F$ is classified into following three types according to its containing leaves; We say $F$ is {\it tiled} if $F$ is foliated entirely b-arc and {\it circular} if $F$ is foliated entirely c-circle. The remaining case, if $F$ is foliated by both b-arcs and c-circles, we say $F$ is {\it mixed}.
In tiled surface, there are only bb-singularity and circular surface has only cc-singularity. Mixed foliation might have all types of singularity and always contains bc-singularity.

\begin{figure}[htbp]
 \begin{center}
  \includegraphics[width=120mm]{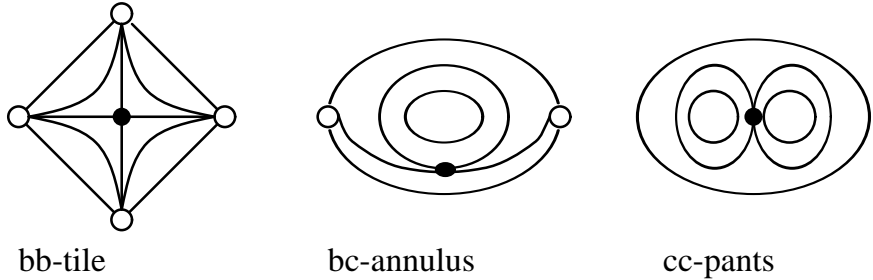}
 \end{center}
 \caption{bb-tile,bc-annulus,cc-pants}
 \label{fig:regions}
\end{figure}
 
\section{Proof of theorem \ref{thm:main1}}

   Now we are ready to prove theorem \ref{thm:main1}. From now on, we always assume $F$ satisfies all conditions (1)-(5) described in section 3. First we consider the case $g=0$. In this case, $F$ is never circular foliated. The main result of \cite{bm1} shows in this case $\widehat{\beta}$ admit exchange move if $F$ is tiled and $\beta$ has word representatives which contains no $\sigma_{k}^{\pm1}$ for some $k$, so $[\beta]_{D}=0$. Thus, in this case theorem holds. Thus, we always assume $g>0$.
 
\subsection {Tiled essential surface}

   First we prove statement (1), tiled case. Assume $F$ is tiled.
Since $F$ is tiled, every singular point is bb-singularity, and $F$ can be decomposed into bb-tiles. A decomposition into bb-tiles determines cellular-decomposition of $F$, and we call this {\it tiling}. 
The notion of vertex and valance we defined in section 3 coincides to usual definition of vertex and valance in this cellular decomposition. 
 
   First we estimate a valance of each vertex, using Euler characteristic argument which is standard in braid foliation theory.

\begin{lem}[Euler characteristic formula, Tiled case]
\label{lem:eulerform-tiled}
  Let $\beta$ be a braid and $F$ be a tiled essential genus $g$ closed surface in the complement of closed braid $\widehat{\beta}$, and $V(i)$ be a number of vertex whose valance is $i$. then,
\[ \sum_{v=1}^{3}(4-v)V(v)+8g-8= \sum_{v=4}^{\infty}(v-4)V(v)\]
 holds.
 \end{lem}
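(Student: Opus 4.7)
The plan is to read off the tiling of $F$ as a genuine cellular decomposition and then compute $\chi(F) = 2-2g$ in two ways, matching a double count of vertex-face incidences against Euler's formula.

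First I would set up the CW structure. Under conditions (1)--(5), when $F$ is tiled, the punctures $A \cap F$ are the $0$-cells, the b-arcs are the $1$-cells, and the bb-tiles shown in Figure \ref{fig:regions} (left) are the $2$-cells. Each bb-tile is a disc whose boundary consists of exactly four b-arcs and four vertices; this is the fundamental local picture of a bb-singularity, so I would simply cite it from \cite{bf}.

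Next I would record two incidence identities. Writing $V$, $E$, $F_2$ for the numbers of vertices, edges, and faces, the fact that each bb-tile has four boundary b-arcs while each b-arc is shared by exactly two bb-tiles gives $4F_2 = 2E$, i.e.\ $E = 2F_2$. Similarly, the fact that each bb-tile has four corners, combined with the observation that the valance of a vertex equals the number of b-arc endpoints there (equivalently, the number of tile corners at that vertex), yields
\[
\sum_{v \geq 1} v \cdot V(v) \;=\; 2E \;=\; 4F_2.
\]

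Finally I would plug these into Euler's formula $V - E + F_2 = 2 - 2g$. Using $V = \sum_v V(v)$ and $E = 2F_2$ gives $F_2 = V + 2g - 2$, so substituting into the valance count produces $\sum_v (v-4)V(v) = 8g - 8$. Splitting the sum at $v = 4$ and moving the $v \leq 3$ terms to the left side rearranges this into the stated identity.

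There is no real obstacle here: the only non-combinatorial point is verifying that the bb-tiles really do form a cellular decomposition of $F$ (in particular, that each tile is a disc and that every edge is interior to exactly two tiles), and this is exactly what conditions (1)--(5) are designed to ensure. Once that is granted, the formula is a routine double count plus Euler's formula.
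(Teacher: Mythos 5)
Your proof is correct and follows essentially the same route as the paper: treat the bb-tiling as a cellular decomposition, use that each tile is a quadrilateral and each edge lies on exactly two tiles to get $E=2F_2$ and $\sum_v vV(v)=2E$, and combine with $V-E+F_2=2-2g$. The only difference is that you spell out the incidence counts and the identification of valance with edge-count slightly more explicitly than the paper does, which is harmless.
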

\begin{proof}
   Let $E,R$ be a number of 1-cell and 2-cell of tiling.
Since $F$ has an Euler characteristic $-2g+2$, $(\sum_{i=1}^{\infty}V_{i}) - E + R = -2g+2$.
On the other hand, each 1-cell is the boundary of exactly two 2-cells and each 1-cell has two vertices as their boundary ,so $2R=E$ and $(\sum_{i=1}^{\infty}iV_{i}) = 2E$. 
By combining these equation, we establish the claim.

\end{proof}
   Since each term of equation in lemma \ref{lem:eulerform-tiled} is positive, lemma \ref{lem:eulerform-tiled} implies $V(i)$ are bounded except $V(4)$. Next we see that Dehornor floor can be estimated from above by a valance of vertex.

\begin{lem}
\label{lem:estimate-tiled}
   Let $F$ be a tiled essential genus $g$ closed surface in the complement of closed braid $L=\widehat{\beta}$. If $F$ contains a valance $v$ vertex, then $[\beta]_{D} < \frac{v}{2}+1$.
\end{lem}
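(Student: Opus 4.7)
The strategy is to exhibit a braid $\alpha$ conjugate to $\beta$ whose word contains at most $\lceil v/2\rceil+1$ letters of each of $\sigma_1$ and $\sigma_1^{-1}$, and then apply Proposition \ref{prop:newestimation}.

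First I would set up the local picture. The $v$ singular leaves at $p$ lie in $v$ distinct singular fibers $H_{\theta_1},\ldots,H_{\theta_v}$ in angular positions $\theta_1<\cdots<\theta_v$, which cut the meridian fibration into $v$ angular sectors. On each sector the foliation of $F$ near $p$ is combinatorially constant: the b-arc at $p$ in a regular fiber $H_\theta$ joins $p$ to a well-defined partner vertex $q_i$ and cuts $H_\theta$ into two half-discs each pierced by a fixed collection of strands of $\widehat{\beta}$. Choosing the basepoint fiber $H_0$ inside any one convenient sector amounts to conjugating $\beta$, and this produces the required $\alpha$.

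Second, using condition (2) --- the foliation is radial at $p$ --- I would normalize by an additional braid isotopy so that the strand corresponding to the puncture of $H_0$ nearest $p$ occupies the first braid position. Under this normalization I would argue that each of the $v$ bb-singularities at $p$ contributes at most one $\sigma_1^{\pm 1}$ letter to the word of $\alpha$, while no other singular fiber of the foliation (one whose singular leaf does not meet $p$) produces a $\sigma_1^{\pm 1}$. The first point is that a bb-saddle at $p$ is locally a swap of the two strands flanking the b-arc at $p$, which under the normalization are strands $1$ and $2$; the second point is that elsewhere strand $1$ is shielded from the saddle by the b-arc at $p$. A balance argument --- the foliation returns to its initial configuration once $\theta$ sweeps through $2\pi$, so the net transport of strand $1$ across position $1$ has to cancel --- then forces the positive and negative $\sigma_1$ counts to be roughly equal, giving $s,k\le \lceil v/2\rceil+1$.

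Finally, Proposition \ref{prop:newestimation} applied to $\alpha$ yields $[\beta]_D=[\alpha]_D<\max\{s,k\}\le v/2+1$, which is the claimed inequality. The main obstacle is the middle step: rigorously identifying each bb-singularity at $p$ with a $\sigma_1^{\pm 1}$ generator, ruling out other contributions to the $\sigma_1$ count, and performing the correct sign-balance bookkeeping. This requires a careful local model of the $n$ braid strands in a small disc neighbourhood of $p$ in each fiber, together with an inductive tracking of the strand positions across consecutive singular fibers to maintain the first-strand normalization.
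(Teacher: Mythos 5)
Your plan---produce a single word for one conjugate of $\beta$ containing only about $v/2$ letters $\sigma_{1}^{\pm 1}$ and apply Proposition \ref{prop:newestimation} once---is more ambitious than what is actually needed, and the step you yourself flag as the main obstacle is a genuine gap, in two places. First, the ``inductive tracking of the strand positions across consecutive singular fibers to maintain the first-strand normalization'' is not a conjugation: re-choosing in each angular sector which puncture is first, and re-standardizing the b-arc at $p$ (which need not sit in any standard position in fixed coordinates), is a re-identification of the fiber with $D_{n}$, and these re-identifications contribute braiding that your letter count does not record; until that is controlled you have not exhibited a word representing a conjugate of $\beta$. Relatedly, crossings of the link strands are not localized at singular fibers of the surface foliation: strands may cross at regular times, and all the b-arc at $p$ gives you is that punctures cannot cross it within a sector; converting this into ``no $\sigma_{1}^{\pm1}$ away from the singularities at $p$'' requires coordinates adapted separately to each sector, i.e.\ a different conjugation on each piece. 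Second, the balance step is unsupported and essentially circular: the foliation returning to its initial configuration after $\theta$ sweeps $2\pi$ does not force the signed $\sigma_{1}$-count of your word to cancel --- bounding that winding is exactly the content of the lemma --- and without the balance your accounting only gives $\max\{s,k\}\le v$, hence $[\beta]_{D}<v$, weaker than claimed.

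The paper's proof avoids both difficulties by cutting the braid at the singular fibers through $p$: after pushing all braiding out of small collars around these fibers, $\beta$ is conjugate to a product $\beta_{1}\beta_{2}\cdots\beta_{v/2}$, where each $\beta_{i}$ spans exactly one singularity at $p$. Each $\beta_{i}$ is conjugated \emph{separately} to a model in which the intersection with the initial fiber consists of vertical arcs and the bb-singularity is standardized, so that $\beta_{i}$ admits a word with a single $\sigma_{1}$ and a single $\sigma_{1}^{-1}$; Proposition \ref{prop:newestimation} then gives $[\beta_{i}]_{D}=0$. The local estimates are reassembled using Proposition \ref{prop:dehornoyfloor} (2) and (3), and the additive losses there (one per concatenation, one for the final conjugation) are precisely where the $+1$ in $[\beta]_{D}<\frac{v}{2}+1$ comes from. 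To repair your argument you would either have to carry out the global normalization honestly, accounting for the sector-by-sector re-identifications and proving the sign balance, or retreat to this decomposition-and-subadditivity scheme.
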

\begin{proof}

   Let $p$ be a valance $v$ vertex and $ \{H_{\theta_{1}},H_{\theta_{2}},\cdots ,H_{\theta_{v}} |\; \theta_{i}<\theta_{i+1} \} $ be a sequence of singular fibers such that each $H_{\theta_{i}}$ contains singular leaves which pass the vertex $p$. Let $b_{\theta}$ be a b-arc in the fiber $H_{\theta}$ which passes $v$. By taking isotopy near each singular fiber $H_{\theta_{i}}$, we can always assume 
that for sufficiently small $\varepsilon > 0 $, braiding of strands does not occur in the interval $[\theta_{i}-\varepsilon, \theta_{i}+\varepsilon]$ and each fiber $H_{\theta_{i} \pm \varepsilon}$ is pierced by $L$ exactly the same points $\{p_{1},p_{2},\cdots,p_{n}\}$. Notice that this modification can be done without changing $H_{0}\cap \widehat{\beta}$ so this modification does not change braid $\beta$ as a element of braid groups. After this modification, we can always consider sub-braiding in each interval $[\theta_{i} - \varepsilon,\theta_{i}+\varepsilon]$, $[\theta_{i}+\varepsilon, \theta_{i+1}-\varepsilon]$.
Since $\varepsilon$ is chosen to sufficiently small, we can assume there exist no other singular points in each interval $[\theta_{i}-\varepsilon, \theta_{i}+\varepsilon]$. 

Now we see a sequence of $b_{\theta}$ as a move of b-arc in $D^{2}$. Then we can also assume by taking isotopy of surface and braids, in the interval $[\theta_{i}+\varepsilon, \theta_{i+1}-\varepsilon]$, $b_{\theta}$ do not move when we see $b_{\theta}$ as a move of the arcs. 
   
 First we investigate a sub-braiding $\beta_{1}$ in $[\theta_{1}+\varepsilon, \theta_{3}-\varepsilon] $.
 By taking appropriate conjugacy of $\beta_{1}$, we can assume $F \cap H_{\theta_{1} + \varepsilon}$ consists of several number of vertical arcs as depicted in left of figure \ref{fig:model} and bb-singularity in $\theta_{2}$ is modeled as shown in the right of figure \ref{fig:model}. We denote this braid by $\beta'_{1}$

  Then, by seeing braids as a move of puncture point of disc, we can write this modified braiding $\beta_{1}'$ in $[\theta_{1}+\varepsilon, \theta_{3}-\varepsilon]$ as shown in figure \ref{fig:braiding}, 
which contains only one $\sigma_{1},\sigma_{1}^{-1}$. The original braiding $\beta_{1}$ in $[\theta_{1}+\varepsilon, \theta_{3}-\varepsilon]$ is conjugate to the braid $\beta_{1}'$ so proposition \ref{prop:newestimation} shows $[\beta_{1}]_{D} =0$.

\begin{figure}[htbp]
 \begin{center}
  \includegraphics[width=110mm]{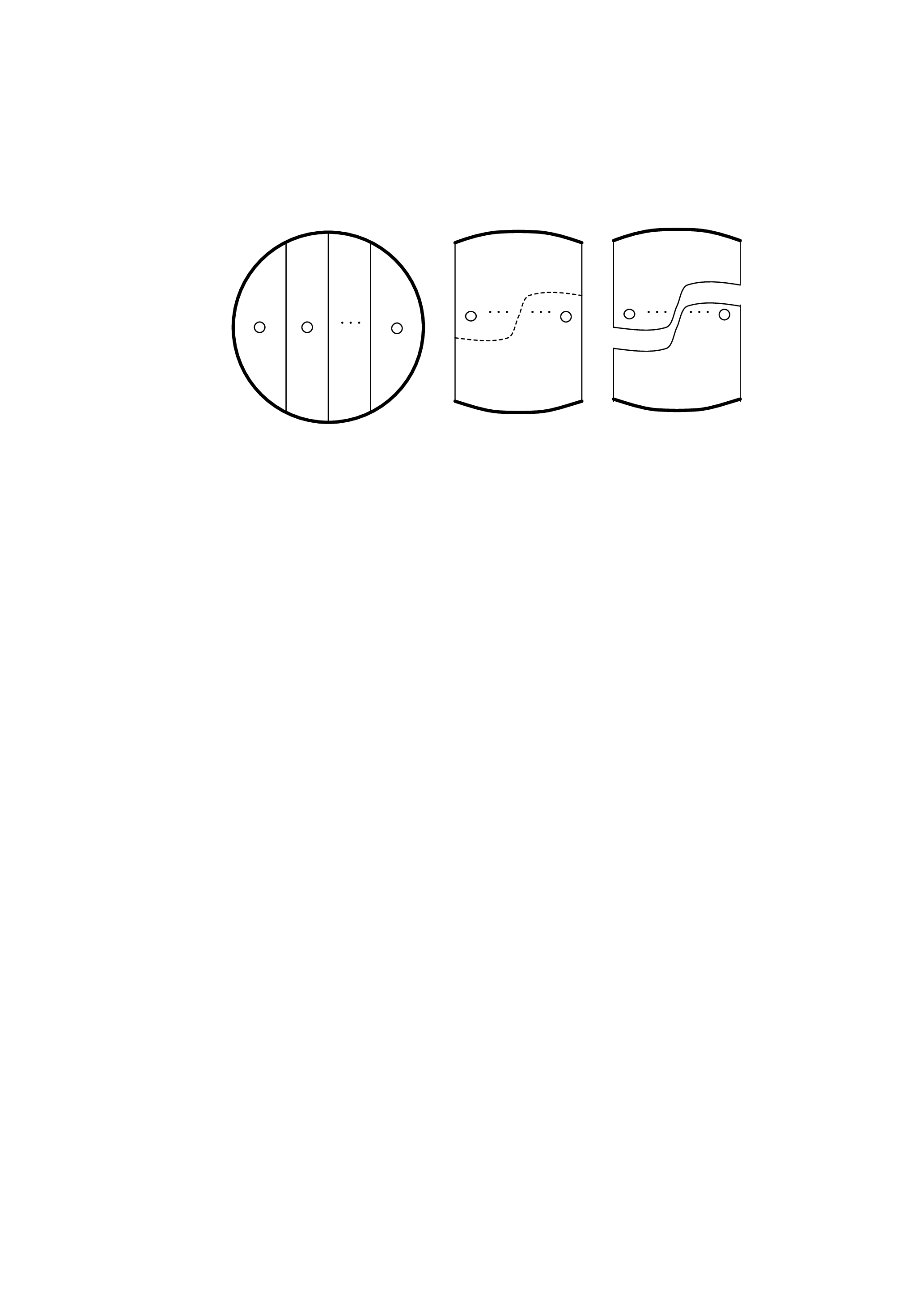}
 \end{center}
 \caption{Model of intersection and bb-singularity}
 \label{fig:model}
\end{figure}
\begin{figure}[htbp]
 \begin{center}
  \includegraphics[width=80mm]{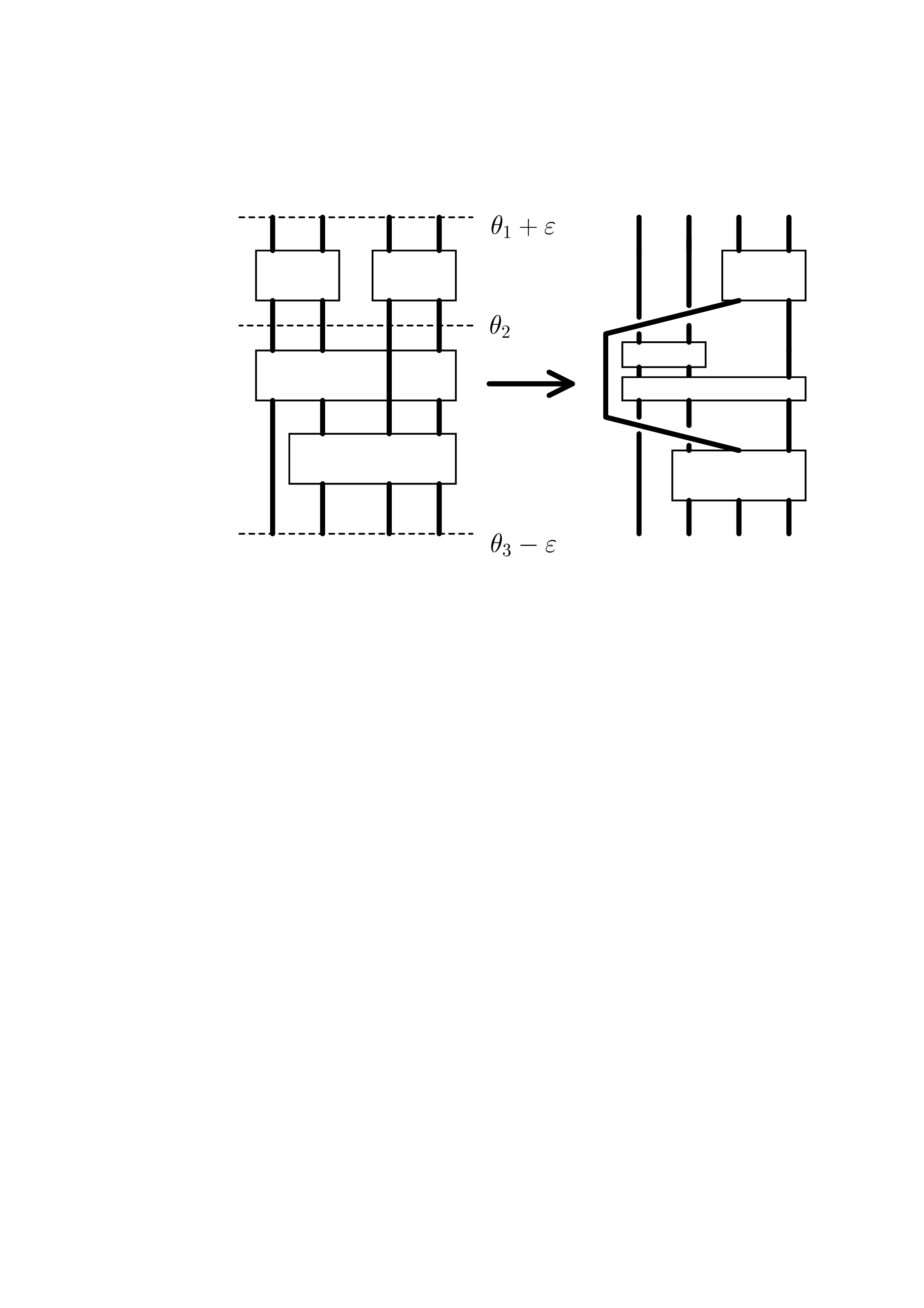}
 \end{center}
 \caption{Braiding in $[\theta_{1}+\varepsilon,\theta_{3}-\varepsilon]$: bb-singularity case}
 \label{fig:braiding}
\end{figure}

   Iterating this argument, we conclude that sub-braid $\beta_{i}$ in the interval $[\theta_{2i-1}+\varepsilon, \theta_{2i+1}-\varepsilon]$ for $i=1,\cdots ,\frac{v}{2}$ has zero Dehornoy floor. Since original braid $\beta$ is conjugate to the braid $\beta_{1}\beta_{2}\cdots\beta_{v \slash 2}$, by proposition \ref{prop:dehornoyfloor}, $ [\beta]_{D}< \frac{v}{2} $.
\end{proof}

   Combining above two lemmas, now it is easy to estimate Dehornor floor via information of tiling.

\begin{proof}[Proof of Theorem \ref{thm:main1}(1)]
   Assume $F$ is tiled. Let $V(i)$ be the number of valance $i$ vertices. If some of $V(i)$ is non-zero for $i \leq 3$, then lemma \ref{lem:estimate-tiled} means $[\beta]_{D}< 2 < g+1$, so conclusion holds. Therefore we can assume $V(i)=0$ for $i \leq 3$, and Euler characteristic formula in lemma \ref{lem:eulerform-tiled} become 
\[ 8g-8= \sum_{v=4}^{\infty}(v-4)V(v)\]

   Since $g$ is not zero, there exist at least two b-arcs in each regular fiber, so at least four vertices in the tiling of $F$. Therefore the minimal valance of vertices is smaller than $max\{ 2g+2,4 \}$, that is, $2g+2$. Then lemma \ref{lem:estimate-tiled} establishes $[\beta]_{D}< g+2$. 
\end{proof}

\subsection{Mixed foliated surface}
   Next we proceed to $F$ is mixed foliated case. The strategy is almost the same as tiled case, but we need to do some additional arguments to evaluate valance of vertices because the decomposition by regions do not determine cellular decomposition. First we see the same conclusion of lemma \ref{lem:estimate-tiled} holds.
   
\begin{lem}
\label{lem:estimate-mixed}
  Let $F$ be a mixed foliated essential genus $g(\neq0)$ closed surface in the complement of closed braid $\widehat{\beta}$. If $\beta$ contains a valance $v$ vertex, then $[\beta]_{D} < \frac{v}{2}+1$.
\end{lem}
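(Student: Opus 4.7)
The plan is to follow the blueprint of Lemma \ref{lem:estimate-tiled}, with extra care at bc-singularities. First observe that the axis $A$ is disjoint from every c-circle, so a cc-singularity can never contribute to the valance of any vertex; hence each of the $v$ singular fibers whose singular leaves pass through a fixed valance $v$ vertex $p$ carries either a bb- or a bc-singularity at $p$. This reduces the problem to analyzing two local models rather than three.

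Enumerate those singular fibers as $H_{\theta_1}, \ldots, H_{\theta_v}$. As in the tiled proof, isotope the braid and the surface so that all braiding is concentrated outside $\varepsilon$-neighborhoods of the $H_{\theta_i}$, and so that in each intermediate window the regular fibers cut $F$ in the same configuration of b-arcs and c-circles. Group the singularities into consecutive pairs and, for $i=1,\ldots,v/2$, let $\beta_i$ denote the sub-braid realized in $[\theta_{2i-1}+\varepsilon,\theta_{2i+1}-\varepsilon]$. The target is to show that each $\beta_i$ is conjugate to a word containing at most one $\sigma_1$ and one $\sigma_1^{-1}$, so that Proposition \ref{prop:newestimation} yields $[\beta_i]_D = 0$; iterating Proposition \ref{prop:dehornoyfloor}(3) then gives $[\beta_1\cdots\beta_{v/2}]_D \leq v/2 - 1$, and since $\beta$ is conjugate to this product, Proposition \ref{prop:dehornoyfloor}(2) promotes this to $[\beta]_D < v/2 + 1$.

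The case of two consecutive bb-singularities was already settled in Lemma \ref{lem:estimate-tiled}, so the new content is a local model at a bc-singularity. At such a singularity the b-arc through $p$ merges with (or splits off) a c-circle $c$ bounding a disc $D$ in the ambient fiber. Because braiding is concentrated at the singular fiber, the strands of $\widehat{\beta}$ trapped inside $D$ remain inside $D$ throughout the $\varepsilon$-window and do not interact with the outside strands. Conjugating by a pure braid supported inside $D$ that pulls the trapped strands into a single narrow braided column near a chosen interior point of $D$ reduces the bc-singularity to a bb-singularity between the b-arc at $p$ and a ``fat'' b-arc representing the collapsed disc $D$. The model picture of Figure \ref{fig:braiding} then applies verbatim to this collapsed configuration, producing the required word with at most one $\sigma_1$ and one $\sigma_1^{-1}$.

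The main obstacle is verifying the bc-model precisely: one needs to choose the pure-braid conjugation so that it is compatible with (and can be performed simultaneously with) the concentration-of-braiding isotopy, and so that it does not introduce any hidden $\sigma_1^{\pm 1}$ through interaction with the b-arc at $p$. The essential simplicity of the induced foliation inside $D$ during the window, together with the fact that no other singularity of $F$ lies in the chosen interval, should make this bookkeeping tractable; once the local model is secured, the proof finishes exactly as in the tiled case.
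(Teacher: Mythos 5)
Your global scheme coincides with the paper's: concentrate the braiding away from the singular fibers through the chosen vertex, cut $\beta$ (up to conjugacy) into the sub-braids $\beta_{i}$ supported on $[\theta_{2i-1}+\varepsilon,\theta_{2i+1}-\varepsilon]$, show each $\beta_{i}$ is conjugate to a word with a single $\sigma_{1}^{\pm1}$ so that Proposition \ref{prop:newestimation} gives $[\beta_{i}]_{D}=0$, and then assemble the estimate with Proposition \ref{prop:dehornoyfloor} (2),(3). Your preliminary observation that cc-singularities never contribute to the valance of a vertex (c-circles are disjoint from the axis) is correct and is exactly why only the bb- and bc-models require analysis; your final bookkeeping is, if anything, slightly more explicit than the paper's.

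The gap is at the step you flag yourself: the local model at a bc-singularity. Your reduction---conjugate by a pure braid that collapses the strands inside the disc $D$ bounded by the c-circle into a narrow column, and then assert that the bb-picture of Figure \ref{fig:braiding} ``applies verbatim''---does not hold as stated. A c-circle has no endpoints on the axis, so no collapsing turns it into a b-arc, fat or otherwise; the saddle resolution at a bc-singularity merges a b-arc and a circle into a single arc (equivalently, the arc through $p$ sweeps around the circle, so the punctures enclosed by the circle end up on the other side of that arc without themselves braiding), which is a topologically different local picture from the bb-resolution of two arcs into two arcs. Hence the word contributed near $\theta_{2}$ cannot simply be read off from Figure \ref{fig:braiding}; one must model the bc-singularity separately and verify directly that, after a suitable conjugation of the sub-braiding on $[\theta_{1}+\varepsilon,\theta_{3}-\varepsilon]$, the resulting word still contains only one $\sigma_{1}^{\pm1}$. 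This is precisely what the paper does via Figures \ref{fig:model2} and \ref{fig:braiding2}, and it is the only genuinely new content of this lemma beyond Lemma \ref{lem:estimate-tiled}; since you leave it as an ``obstacle,'' your proof is incomplete at its essential point, even though the conclusion of that local analysis (one $\sigma_{1}^{\pm1}$ per pair of singular fibers through $p$) is correct and the rest of your argument then goes through.
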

\begin{proof}
   Let $p$ be a valance $v$ vertex. We can use the same arguments appeared in proof of lemma \ref{lem:estimate-tiled}. The main difference is a presence of c-circles and bc-singularity around $p$. 
   First we modify closed braid as in the proof of lemma \ref{lem:estimate-tiled} so that we can consider sub-braiding in each interval  $[\theta_{i}-\varepsilon, \theta_{i}+\varepsilon]$,  $[\theta_{i}+\varepsilon, \theta_{i+1}-\varepsilon]$.
   Assume that singular point in the fiber $H_{\theta_{2}}$ is bc-singularity. Then by taking appropriate conjugation of sub-braiding $[\theta_{1}+\varepsilon, \theta_{3}-\varepsilon]$, bc-singularity is now modeled as in the figure \ref{fig:model2}. Then braiding in the interval $[\theta_{1}+\varepsilon, \theta_{3}-\varepsilon]$ can be written as shown in figure \ref{fig:braiding2}, which contains only one $\sigma_{1}^{\pm 1}$. Thus, the same argument of the proof of lemma \ref{lem:estimate-tiled} gives desired estimation.
   
\begin{figure}[htbp]
 \begin{center}
  \includegraphics[width=80mm]{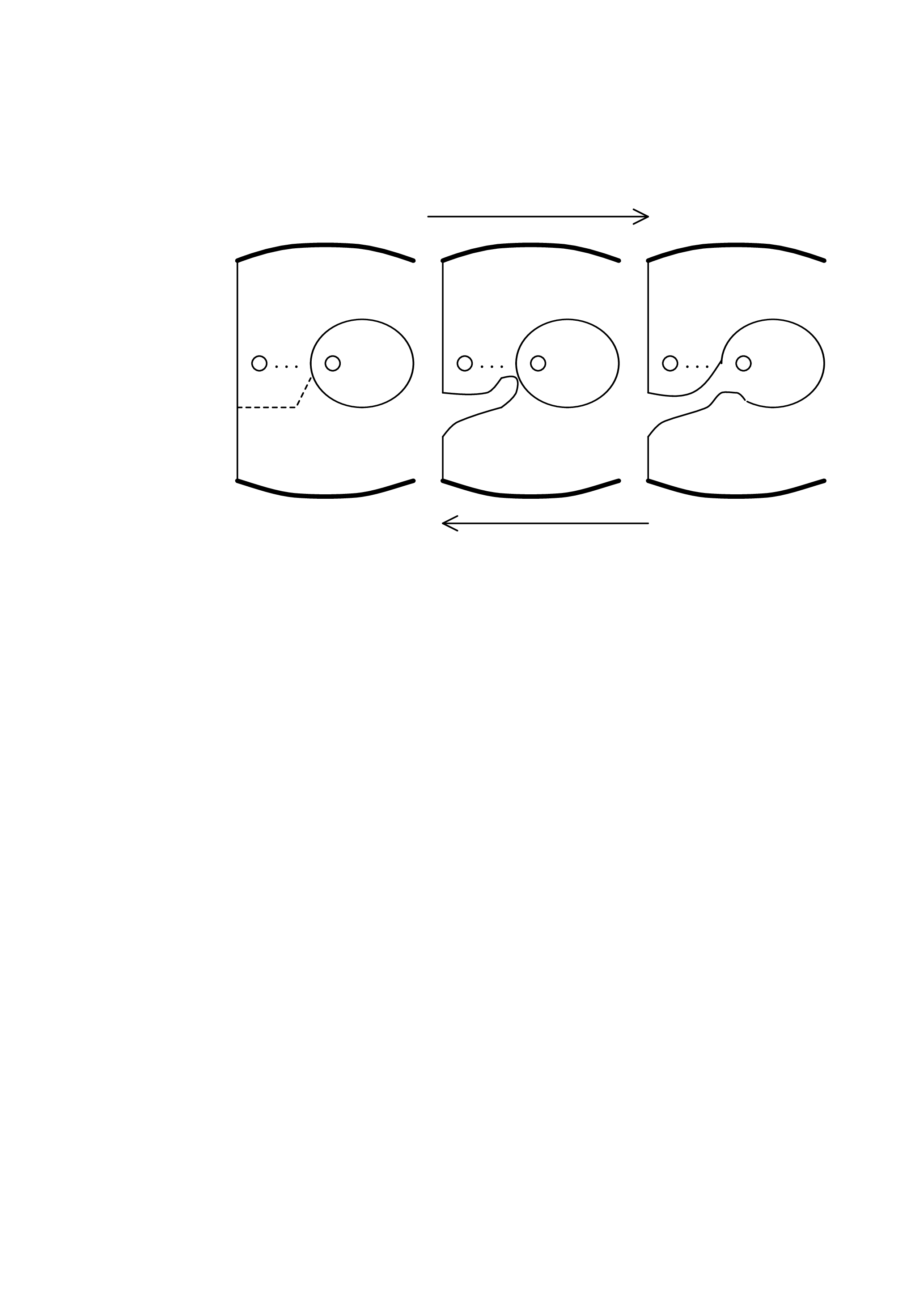}
 \end{center}
 \caption{Model of bc-singularity}
 \label{fig:model2}
\end{figure} 

\begin{figure}[htbp]
 \begin{center}
  \includegraphics[width=80mm]{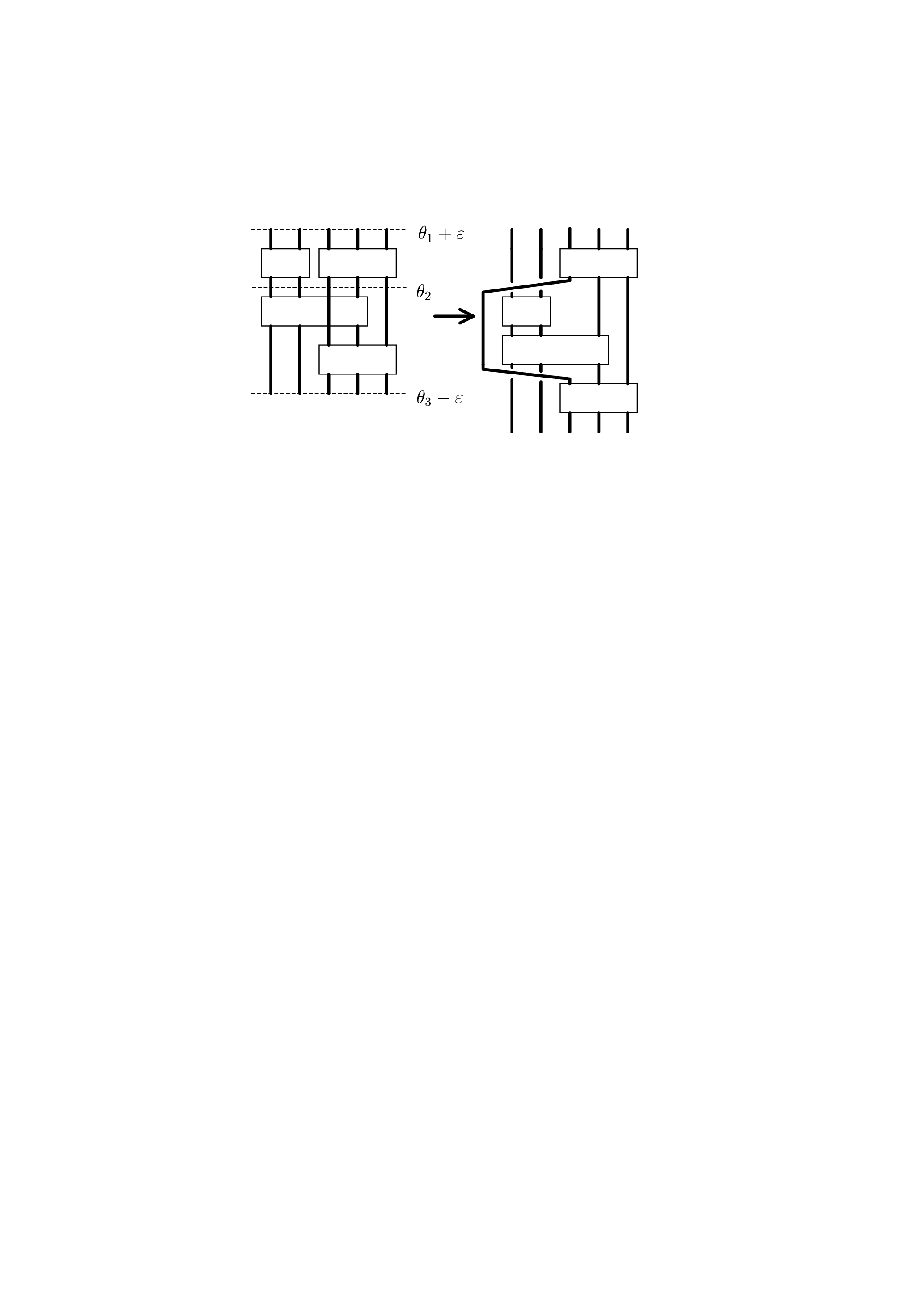}
 \end{center}
 \caption{Braiding in $[\theta_{1}+\varepsilon, \theta_{3}-\varepsilon]$: bc-singularity case}
 \label{fig:braiding2}
\end{figure}

\end{proof}

   Then, we want to estimate the minimal valance of vertices. 
Since there might be all types of singularity in the foliation of $F$, in general decomposition into regions dose not define usual sense of cellular decomposition: bc-region is annulus, and cc-region is pair of pants. However, in special case of normal mixed foliation, defined below, one can constructs canonical cellular decomposition via decomposition into regions as shown in \cite{bm2}. First we describe this. 

   We say mixed foliation is {\it normal} if there exists no cc-singularity.
In normal mixed foliation case, we can construct canonical cellular decomposition as follows. 
First we see that in normal mixed foliation, bc-annuli always occur in pairs. This is because the c-circle boundary of bc-annuli must be attached to c-circle boundary of another bc-annuli, since there is no cc-regions and boundary of bb-tile are all b-arcs. Now, take annulus $W$ obtained by attaching two bc-annuli along their c-circle boundaries. Each component of $\partial W $ has two vertices. By cutting $W$ along the arc which connects two vertices which lies different components of $\partial W$, as shown in figure \ref{fig:be-tile}, we obtain two 2-cells, called {\it be-tile}. These cutting arcs are called {\it e-edge}.
 Now decomposition into be-tile and bb-tile determines cellular decomposition of $F$.
 
\begin{figure}[htbp]
 \begin{center}
  \includegraphics[width=120mm]{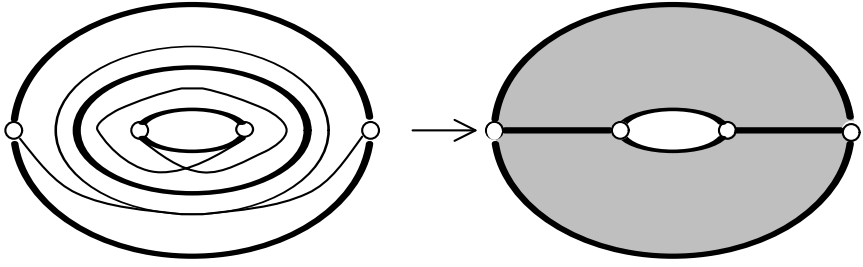}
 \end{center}
 \caption{Construction of be-tile}
 \label{fig:be-tile}
\end{figure}

   Note that there might exist be-tile which has no singular point or two singular points, but from construction of be-tile, be-tile with zero singularity and be-tile with two singularity must occur in pairs in each neighbor of vertex, so the notion of vertex and valance defined in section 3 coincide with the usual meaning of vertex and valance in this cellular decomposition.
   Then we see the same Euler characteristic formula holds: 
 
\begin{lem}
\label{lem:eulerform-nmixed}  
   Let $F$ be a normal mixed foliated surface with genus $g \neq 0$ and $V(v)$ be a number of vertex whose valance is $v$ in cellular decomposition described above. Then,
 
\[\sum_{v=1}^{3}(4-v)V(v)+8g-8 = \sum_{v=4}^{\infty}(v-4)V(v)\]
holds.
\end{lem}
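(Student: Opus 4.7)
The plan is to reduce this to the Euler-characteristic calculation already carried out in Lemma \ref{lem:eulerform-tiled}, by showing that the cellular decomposition built from bb-tiles and be-tiles has the same combinatorial features as a pure tiling. Concretely, I need to verify two things: every $2$-cell has exactly four edges, and the graph-theoretic degree of every vertex agrees with the notion of valence from Section 3. Once these are in place, the Euler-characteristic identity is a mechanical rewriting of $V - E + R = 2 - 2g$.

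For the first point, bb-tiles are quadrilaterals exactly as in the tiled case. For be-tiles, the construction described just before the lemma and depicted in Figure \ref{fig:be-tile} shows that after attaching two bc-annuli along their c-circle boundaries and cutting the resulting annulus $W$ along an e-edge, each of the two resulting $2$-cells is bounded by two b-arcs (one from each boundary component of $W$) and two copies of the e-edge, hence is a quadrilateral. Each b-arc is shared between two regions in the usual leaf-pairing manner, and each e-edge, having been created by cutting, borders exactly the two be-tiles obtained from the cut; therefore every $1$-cell lies on the boundary of exactly two $2$-cells. These two facts together yield $4R = 2E$, i.e.\ $E = 2R$.

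For the second point, each $1$-cell has two endpoints in $A \cap F$, so summing graph degrees gives $\sum_{v} v\,V(v) = 2E$. The match between graph degree and the Section 3 valence uses the remark immediately preceding the lemma: be-tiles with zero singularities and be-tiles with two singularities occur in pairs around each vertex, so their contributions to the graph degree and to the count of singular leaves through a vertex agree. I expect this is the only real bookkeeping step of the proof; the rest is automatic.

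With these relations in hand, substitute $E = 2R$ and $\sum_{v} v\,V(v) = 2E$ into $\sum_{v} V(v) - E + R = 2 - 2g$ to obtain
\[
\sum_{v=1}^{\infty} (4-v)\,V(v) \;=\; 8 - 8g,
\]
and then split the sum at $v=4$ (the $v=4$ term contributes zero) to rearrange it into
\[
\sum_{v=1}^{3}(4-v)V(v) + 8g - 8 \;=\; \sum_{v=4}^{\infty}(v-4)V(v),
\]
which is the stated identity. The main obstacle, as noted, is confirming that the degenerate be-tiles really pair up correctly so that valence is preserved; once that is settled, the argument is a direct transcription of Lemma \ref{lem:eulerform-tiled}.
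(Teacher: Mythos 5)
Your proposal is correct and follows essentially the same route as the paper: it verifies that be-tiles, like bb-tiles, contribute four 1-cells each and that every 1-cell bounds exactly two 2-cells (with the paired degenerate be-tiles ensuring the combinatorial valence matches the Section~3 valence), and then repeats the Euler-characteristic computation of Lemma~\ref{lem:eulerform-tiled}. The only slight imprecision is in the cutting step (one obtains the two quadrilateral be-tiles by cutting the annulus $W$ along \emph{two} e-edges, not one), but this does not affect the combinatorial counts you actually use.
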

\begin{proof}
    A be-tile has exactly four 1-cells in its boundary and each 1-cell is boundary of two 2-cells, so the same argument in proof of lemma \ref{lem:eulerform-tiled} gives desired result.
\end{proof}

   So we can estimate minimal valance of vertices in case of normal mixed foliation by using lemma \ref{lem:eulerform-nmixed} as in the tiled case. To estimate general mixed foliated case, we perform "preimage surgery" to delete cc-singularities.
   Let $F$ be a mixed foliated surface and we denote $\mathcal{F} $ by the preimage of $F$; that is, $\mathcal{F}$ is a foliated surface forgetting information of embedding.
   
   Let $c$ be an one of the c-circle boundary of cc-pants $P$. $c$ is also a boundary of some other cc-pants or bc-annulus $Q$. We modify foliations of $P$ and $Q$ as shown in figure \ref{fig:preimagesurgery} by filling $c$ by discs and changes foliation in $P$ and $Q$, according to type of regions. Since modified regions are trivially foliated, these regions are amalgamated to some other regions and they disappears. Moreover, this operation can be done so that obtained surface is connected by choosing c-circle boundary $c$ appropriately and reduces genus of $\mathcal{F}$ because all c-circle in the foliation is non-null homotopic in $\mathcal{F}$. 
Thus, this modification deletes two regions $P$ and $Q$, hence two singular points. We call this operation {\it preimage surgery} because we can regard this modification as surgery of $\mathcal{F}$ along c-circle $c$. We remark that preimage surgery is not realizable as a surgery of embedded surface $F$, because we cannot choose take a disc bounded by $c$ without intersecting original surface $F$ in $S^{3}$.

   Since each preimage surgery reduces a genus of $\mathcal{F}$, after $k \leq g$ preimage surgery we obtain normal mixed foliated surface $\mathcal{F}'$. As we remarked before, $\mathcal{F}'$ cannot be realized as an embedded surface, but we can use the arguments of proof of lemma \ref{lem:eulerform-nmixed}, because the proof uses only information from cellular decomposition of $\mathcal{F}'$. Therefore we can use lemma \ref{lem:eulerform-nmixed} for $\mathcal{F}'$ to estimate valance of vertices of preimage-surgered surface $\mathcal{F}'$.
\begin{figure}[htbp]
 \begin{center}
  \includegraphics[width=120mm]{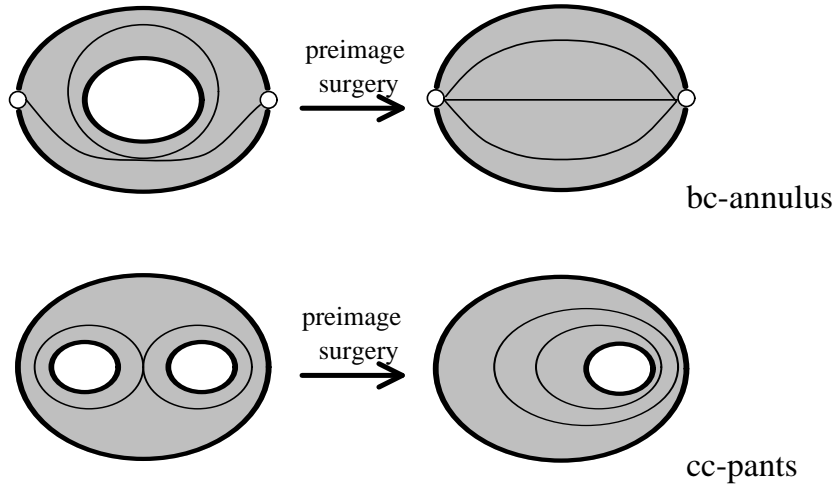}
 \end{center}
 \caption{Preimage surgery}
 \label{fig:preimagesurgery}
\end{figure}

\begin{proof}[Proof of Theorem \ref{thm:main1}(2)(3)]
    Assume $F$ is mixed foliated and after performing preimage surgery $k$-times described above, we create abstract foliated surface $\mathcal{F}'$ with no cc-singularity, that is, normal mixed foliated. 
Let $m$ be a minimal valance of vertices of $\mathcal{F}'$. Then, we trace preimage surgery procedure backwards. Each surgery deletes at most one bc-singularity, so original surface $F$ has at most more than $k$ bc-singular points than that of $\mathcal{F}'$. Since singular leaf containing bc-singularity has two vertices as its endpoints and there exist at least two vertices in $F$, we conclude minimal valance of vertices in $F$ is smaller than $m+k$. Thus, we only need to estimate $m$ to estimate minimal valance of vertices in original surface $F$.

   Let $V(v)$ be the number of valance $v$ vertices of $\mathcal{F}'$. If $V(i)$ is non-zero for some $1 \leq i \leq 3$, then form the argument above, we conclude the minimal valance of vertices of $F$ is smaller than $3+g$, so $[\beta]_{D} < \frac{3+g}{2} \leq 2g+1$, so we can assume $V(i)$ is zero for all $1 \leq i \leq 3$.
  In this case, the Euler characteristic formula for $\mathcal{F}'$ become 
  \[8(g-k)-8 > \sum_{i=4}^{\infty}(v-4)V(v).\]
     Mixed foliated means there exist at least one b-arc in the foliation, so there exist at least two vertices in $\mathcal{F}'$. Therefore the minimal value of valance of vertices in $\mathcal{F}'$ is less than $4(g-k)$. Therefore, from our earlier argument, minimal valance of vertices in $F$ is smaller than $4(g-k)+k = 4g-3k+1$. Now, lemma \ref{lem:estimate-mixed} establishes an inequality $[\beta]_{D}< 2g-\frac{3k}{2}< 2g+1$.
     
 From statement (1) and (2), we conclude if $[\beta]_{D} \geq 2g+1$, then $F$ is neither tiled nor mixed-foliated, hence circular-foliated. This completes proof of theorem \ref{thm:main1}
\end{proof}

\section{Nielsen-Thurston classification and the geometry of closed braid complements}

   In this section, we describe an application of theorem \ref{thm:main1} for genus one closed surface, namely, essential torus. A description of essential torus in closed braid complements is studied in \cite{bm2},\cite{n} by using braid foliation machinery. 
Since an essential torus deeply involves the geometry of link complements, we can extract some of geometrical information of closed braids from theorem \ref{thm:main1}.
\subsection{Nielsen-Thurston classification}
   First of all, we recall the Nielsen-Thurston classification of braids. 
Nielsen-Thurston theorem states that an isotopy class of orientation preserving homeomorphism $[f]$ of compact oriented surface $F$, that is, an element of mapping class group $MCG(F)$ of $F$, is classified into following three types:
\begin{enumerate}
\item {\it periodic}: there exists a positive integer $m$ such that $f^{m}$ is isotopic to the identity.
\item {\it reducible}: there exist an essential 1-submanifold $C$ of $F$, which is invariant under $f$.
\item {\it pseudo-Anosov}: $f$ is isotopic to a pseudo-Anosov homeomorphism.
\end{enumerate}

   We do not describe precise definition of pseudo-Anosov homeomorphism, because we do not use it.(see \cite{flp} for more details about Nielsen-Thurston theory.)
   Let $D_{n}$ be a $n$-punctured disc. Since braid group $B_{n}$ is identified with the group of boundary-fixing isotopy class of homeomorphisms of $D_{n}$ which is identity on $\partial D_{n}$, there exist natural projection $\pi:B_{n} \rightarrow MCG(D_{n})$ regarding an element of $B_{n}$ as an isotopy class of homeomorphism of $D_{n}$ whose restriction on $\partial D_{n}$ are not necessarily identity.
 Therefore we can define the Nielsen-Thurston classification of braids by taking pull-back of the Nielsen-Thurston classification of $MCG(D_{n})$.
Concretely, the Nielsen-Thurston classification of $B_{n}$ can be described as following way:
\begin{enumerate}
\item {\it periodic} : some powers of $\beta$ is equal to some powers of Garside fundamental braid $\Delta$. 
\item {\it pseudo-Anosov} : its representing homeomorphism is pseudo-Anosov.
\item {\it reducible}: its representing homeomorphism is reducible.
\end{enumerate}

   Periodic braids are the most simple braids and it is known that periodic braids are conjugate to some powers of braid $(\sigma_{1}\sigma_{2}\cdots\sigma_{n-1})$ or $(\sigma_{1}\sigma_{2}\cdots\sigma_{n-1}\sigma_{1})$, which represents rotation of disc.

   We remark that the Nielsen-Thurston classification of mapping class groups has very beautiful connection to the geometry of its mapping torus. Let $F$ be a closed surface, $\varphi$ be a homeomorphism of $F$ and $F_{\varphi}$ be the mapping torus of $\varphi$. The topology of mapping torus $F_{\varphi}$ only depends on the isotopy class of $\varphi$.
   
   It is known that $F$ admit a complete hyperbolic structure if and only if $\varphi$ is pseudo-Anosov and $F_{\varphi}$ is Seifert-fibered if and only if $\varphi$ is periodic. Moreover, $F_{\varphi}$ contains an essential torus (hence decomposable with respect to geometric decomposition) if and only if $\varphi$ is reducible. Thus, there exist one-to-one correspondence between the Nielsen-Thurston classification of mapping class group and the geometry of its mapping torus.
 Hence, in this regard, theorem \ref{thm:main2} can be seen as some generalization of this well-known fact in the knot theory settings. 

\subsection{Proof of Theorem \ref{thm:main2}}

   First we investigate non-braided satellite knots. Recall that a satellite knot $K$ is called braided-satellite if its complements has essential tori $T$ which is circular foliated under appropriate choice of closed braid representatives of $K$.
In general it is difficult to handle non-braided satellite knot in braid theory, but fortunately they do not appear under assumption of theorem \ref{thm:main2}.

\begin{prop}
\label{prop:nbdsatellite}
Let $\beta \in B_{n}$ be a braid such that $K=\widehat{\beta}$ is a non-braided satellite knot. Then, $[\beta ]_{D} < 3 $. 
\end{prop}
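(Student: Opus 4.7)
The plan is to argue by contraposition: assume $[\beta]_D \geq 3$, and show that $K = \widehat{\beta}$ must then be braided-satellite, contradicting the hypothesis. Since $K$ is by assumption a satellite knot, there exists an essential torus $T$ in the complement of the closed braid $\widehat{\beta}$. Because $T$ is an essential closed surface of genus $g=1$, the threshold $2g+1$ appearing in Theorem \ref{thm:main1}(3) equals exactly $3$, so the hypothesis $[\beta]_D \geq 3$ triggers that clause and forces $T$ to be circular-foliated with respect to the braid foliation induced by the fibration $\{H_\theta\}$ of $S^3 \setminus A$.

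Next I would translate ``circular-foliated'' into the required geometric statement about the axis $A$. Recall from Section 3 that each vertex of the foliation of $T$ is a point of $A \cap T$ and carries a radially foliated neighborhood; the leaves near such a vertex are necessarily arcs with an endpoint on $A$, i.e.\ pieces of b-arcs. Hence a foliation containing only c-circles can contain no vertex at all, so $A \cap T = \emptyset$. Consequently $\beta$ itself provides a closed braid representative of $K$ whose axis is disjoint from the essential torus $T$, meaning $K$ is braided-satellite, a contradiction.

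The proof is essentially a direct specialization of Theorem \ref{thm:main1}(3) to the genus $g = 1$ case, combined with the elementary dictionary between ``circular foliation'' and ``disjoint from the axis''. There is no real obstacle; the only point worth flagging is that the argument uses the \emph{given} braid $\beta$ itself as the axis-disjoint representative of $K$, which is legitimate because the definition of braided-satellite demands only that \emph{some} braid representative have this property, and here that representative is $\beta$.
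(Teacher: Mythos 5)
Your proposal is correct and follows essentially the same route as the paper: apply Theorem \ref{thm:main1}(3) with $g=1$ (so the threshold $2g+1=3$) to conclude that every essential torus in the complement of $\widehat{\beta}$ is circular foliated once $[\beta]_D \geq 3$, contradicting the assumption that $K$ is non-braided-satellite. The only difference is cosmetic: you spell out the dictionary ``circular foliated $\Rightarrow$ no vertices $\Rightarrow$ torus disjoint from the axis,'' a step the paper treats as immediate from its working definition of braided-satellite.
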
 
    
\begin{proof}
   Assume $[\beta]_{D} \geq 3$. Then theorem \ref{thm:main2} shows all of essential tori in the complement of $K$ is circular, contradicts to assumption $K$ is non-braided satellite.
\end{proof}

\begin{rem}
In \cite{mn}, it is proved that if a closure of $\beta$ is a composite knot, then $[\beta]_{D}<2$. Proposition \ref{prop:nbdsatellite} can be seen some kind of generalization of this result because composite knots are special types of non-braided satellite knots.
\end{rem}

 To prove theorem \ref{thm:main2}, We need the following result of Menasco concerning about Torus knots.
 
\begin{thm}[Menasco(\cite{me1},\cite{me2})]
   Torus knots are exchange-reducible. That is, For every closed braid representative $\widehat{\beta}$ of $(p,q)$-torus knot,
there exists a sequence of closed braids 
\[ \widehat{\beta}= \widehat{\beta_{0}}\rightarrow \widehat{\beta_{1}} \rightarrow \cdots \rightarrow \widehat{\beta_{m}} \] 
 such that $\widehat{{\beta}_{i+1}}$ is obtained by performing exchange move or destabilization or isotopy of $\widehat{\beta_{i}}$, and $\beta_{m} = (\sigma_{1}\sigma_{2}\cdots\sigma_{p-1})^q$. 
\end{thm}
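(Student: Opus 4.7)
The plan is to apply Birman--Menasco braid foliation theory to the Heegaard torus $T$ on which the $(p,q)$-torus knot lives, and carry out a double induction: an outer induction on braid index, and an inner induction on a complexity measure (say, total number of singularities) of the foliation of $T$ induced by the meridian fibration of $S^{3}\setminus A$. Start by fixing a closed braid representative $\widehat{\beta}$ with axis $A$, and isotope the standard Heegaard torus $T$ containing $\widehat{\beta}$ into general position with respect to the fibration, so that the foliation of $T$ satisfies conditions (1)--(5) of Section 3; the foliation is then tiled, mixed, or circular.

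For the base case, suppose $T$ is circular-foliated. Then every non-singular leaf is a c-circle, each bounding a meridian disc of one of the two complementary solid tori of $T$. Choosing meridian--longitude coordinates on $T$ so that $A$ is the core of one solid torus, the $(p,q)$-curve $\widehat{\beta}\subset T$ can be read off directly; after removing valence-one vertices by destabilization and rearranging via isotopy, one identifies $\widehat{\beta}$ with $(\sigma_{1}\sigma_{2}\cdots\sigma_{p-1})^{q}$.

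In the non-circular cases, I would run the Euler characteristic argument of Section 4 on $T$. Since $\chi(T)=0$, the formula becomes
\[ \sum_{v=1}^{3}(4-v)V(v) = \sum_{v=4}^{\infty}(v-4)V(v). \]
Either there exists a vertex of valence at most $3$, or every vertex has valence exactly $4$. In the low-valence case, a local analysis of the regions meeting the vertex should produce either a destabilization (removing a monogon-like region) or an exchange move (swapping two b-arcs around a pair of opposing singularities). Combined with the "change of foliation" moves of Birman--Menasco --- which locally swap bb-singularities between adjacent tiles without altering the embedded surface or the knot type --- these operations strictly decrease the inner complexity and close the inner induction. Once no singularities remain, $T$ is circular-foliated and the base case applies.

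The main obstacle is the all-valence-$4$ case, in which no low-valence vertex is visible and the Euler characteristic formula provides no immediate obstruction. Here one must combine a global combinatorial analysis of the tessellation with the full repertoire of foliation changes (including those handling bc-annuli and cc-pants when mixed foliation is present) to produce either a valence drop after rearrangement or a direct identification of a thick sub-annulus that admits destabilization. Verifying that each foliation modification is realized on the closed braid side by an exchange move, a destabilization, or an ambient isotopy fixing the axis --- and that the sequence terminates at the standard word $(\sigma_{1}\cdots\sigma_{p-1})^{q}$ --- is the technically delicate core of the argument, and occupies the bulk of Menasco's original papers \cite{me1},\cite{me2}.
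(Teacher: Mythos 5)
This statement is not proved in the paper at all: it is quoted as a theorem of Menasco, with only the remark that its proof uses braid foliation techniques. So there is no in-paper argument to compare against, and the real question is whether your sketch would stand on its own as a proof. It does not, and you essentially say so yourself: the entire content of Menasco's theorem is the part you defer, namely the analysis when no low-valence vertex is available (on a torus, $\chi(T)=0$ makes the Euler characteristic identity balance exactly, so the all-valence-$4$ tessellation is not an exceptional corner case but the generic situation one must defeat), together with the verification that every change of foliation and every simplification of bc-annuli and cc-pants is realized on the closed braid by an exchange move, a destabilization, or an axis-preserving isotopy, and that the induction terminates at the standard word $(\sigma_{1}\cdots\sigma_{p-1})^{q}$. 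Waving at ``a global combinatorial analysis of the tessellation'' is naming the theorem, not proving it; as a blind proof attempt this is a genuine gap.

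There is also a setup error worth flagging: the foliation machinery of Section 3 (conditions (1)--(5), leaves that are only b-arcs and c-circles, essentiality of leaves measured by how the complementary regions are pierced by $L$) is formulated for closed surfaces \emph{disjoint} from the closed braid. The Heegaard torus you propose to foliate \emph{contains} $\widehat{\beta}$, so the braid meets the leaves, the standardly embedded torus is compressible in $S^{3}$, and the essentiality and general-position arguments do not transfer verbatim; Menasco's papers develop a separate calculus for a foliated torus carrying the braid on it (and, in \cite{me1}, for iterated torus knots more generally). So even the base case, where you read off the $(p,q)$-curve from a circularly foliated torus and claim the destabilizations fall out, needs the knot-on-surface version of the theory rather than the version summarized in Section 3. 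In short: right general flavor (Birman--Menasco foliations, valence counts, exchange moves and destabilizations), but the technical core is missing and the framework you invoke is not the one that applies to this surface.
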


  We remark that this Menasco's theorem is also proved by braid foliation techniques. 

\begin{proof}[Proof of theorem \ref{thm:main2}]

   First assume $\beta \in B_{n}$ and $K =\widehat{\beta}$ is a torus knot. If $n$ is not the braid index of knot $K$, that is, if $K$ can be represented as a closure of other braids which have strictly smaller number of strands, then Menasco's theorem says $K$ admit exchange move or destabilization, hence by the proposition \ref{prop:dehornoyfloor} (5), we conclude $[\beta]_{D}\leq 1$, which contradicts our assumption $[\beta]_{D} \geq 3 $.
 Therefore we can assume that $n$ is braid index of $K$. In this case, also Menasco's theorem says $\beta$ is conjugate to braid $(\sigma_{1}\sigma_{2}\cdots \sigma_{n-1})^{p}$ for some $p$, hence $\beta$ is periodic.

   On the other hand, it is known that periodic braids are conjugate to braids $(\sigma_{1}\sigma_{2}\cdots\sigma_{n-1})^{s}$ or $(\sigma_{1}\sigma_{2}\cdots\sigma_{n-1}\sigma_{1})^{s}$ for some $s$ , both of which represent rotation of disc. Since closure of $(\sigma_{1}\sigma_{2}\cdots\sigma_{n-1}\sigma_{1})^{s}$ never become a knot, it is obvious that closure of periodic braids represent torus knots.
 
   Next assume $K=\widehat{\beta}$ is a satellite knot. 
   Let $T$ be a essential torus in the complement of $K$. Of course there might be some of choice for such torus, we can take arbitrary one. Our assumption $[\beta]_{D} \geq 3$ and theorem \ref{thm:main1} shows $T$ is circular-foliated, so intersection of $T$ with some fiber $H_{\theta}$ gives reducing 1-submanifolds of $\beta$ as a homeomorphism of $D_{n}$, hence $\beta$ is reducible. The converse is trivial.

   Since both classifications are exclusive, we conclude that $\widehat{\beta}$ is a hyperbolic knot if and only if $\beta$ is pseudo-Anosov.

\end{proof}

 As a corollary, we get an infinite, almost disjoint family of hyperbolic knots for each pseudo-Anosov element of mapping class group of punctured disc. Although it is known that "almost all" prime knots are hyperbolic knot, in general it is not so easy to determine given not is hyperbolic or not. Especially, most of known family of knots known to be hyperbolic are defined through knot diagrams such as alternativeness. We now construct many families of hyperbolic knot through braid-theoretical approach.

\begin{cor}
\label{cor:c1}
 Suppose $[f] \in MCG(D_{n})$ be a pseudo-Anosov element of mapping class group of $n$-punctured disc and $\pi: B_{n} \rightarrow MCG(D_{n})$ be the natural projection. Let $ P([f])=\{ \widehat{\beta} \;| \;\beta \in \pi^{-1} ([f]),[\beta]_{D} \geq 3 \} $. 

Then $P([f])$ consists of infinite number of distinct hyperbolic knots.
Moreover, let $[g] \in MCG(D_{n})$ be another pseudo-Anosov element which is not conjugate to $[f]$ in $MCG(D_{n})$.
Then, the intersection of $P([f])$ and $P([g])$ are finite. 

\end{cor}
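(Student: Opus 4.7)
The plan is to exploit the fact that the kernel of $\pi:B_n\to MCG(D_n)$ is generated by the central element $\Delta^2$ (the Dehn twist along $\partial D_n$), so after fixing one $\beta_0$ with $\pi(\beta_0)=[f]$, the fiber $\pi^{-1}([f])$ is exactly $\{\beta_0\Delta^{2k}\mid k\in\mathbb{Z}\}$. Because $\Delta^2$ is central and $\Delta^2>1$ in the Dehornoy ordering, left-multiplying the inequalities $\Delta^{-2[\beta_0]_D-2}<\beta_0<\Delta^{2[\beta_0]_D+2}$ by $\Delta^{2k}$ yields $\Delta^{2(k-[\beta_0]_D-1)}<\beta_0\Delta^{2k}<\Delta^{2(k+[\beta_0]_D+1)}$, which shows $[\beta_0\Delta^{2k}]_D\to\infty$ as $|k|\to\infty$. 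In particular all but finitely many $k$ simultaneously satisfy $[\beta_0\Delta^{2k}]_D\geq 3$ and $[\beta_0\Delta^{2k}]_D>r(n)$, where $r(n)$ is the constant from Proposition \ref{prop:dehornoyfloor}(5).

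For such $k$, the braid $\beta_0\Delta^{2k}$ still projects to the pseudo-Anosov class $[f]$, hence is itself pseudo-Anosov, and Theorem \ref{thm:main2} gives that $\widehat{\beta_0\Delta^{2k}}$ is a hyperbolic knot. (The closure is a knot because $\Delta^2$ acts trivially on the punctures, so the permutation of $\beta_0\Delta^{2k}$ coincides with that of $\beta_0$; the statement is vacuous if that permutation is not a single cycle.) To see that distinct $k$ yield distinct knots, I argue by contradiction: if $k\neq k'$ both have Dehornoy floor greater than $r(n)$ and $\widehat{\beta_0\Delta^{2k}}=\widehat{\beta_0\Delta^{2k'}}$, Proposition \ref{prop:dehornoyfloor}(5) forces $\beta_0\Delta^{2k}$ and $\beta_0\Delta^{2k'}$ to be conjugate in $B_n$. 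The exponent-sum homomorphism $e:B_n\to\mathbb{Z}$ is conjugacy-invariant and satisfies $e(\Delta^2)=n(n-1)$, so conjugacy would force $kn(n-1)=k'n(n-1)$ and hence $k=k'$, a contradiction. This proves the first assertion.

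For the second assertion I fix $\beta_0'$ with $\pi(\beta_0')=[g]$ and suppose, toward a contradiction, that $P([f])\cap P([g])$ is infinite. Then infinitely many $k$ admit some $m=m(k)$ with $[\beta_0\Delta^{2k}]_D>r(n)$ and $\widehat{\beta_0\Delta^{2k}}=\widehat{\beta_0'\Delta^{2m}}$. Applying Proposition \ref{prop:dehornoyfloor}(5) to $\beta_0\Delta^{2k}$ forces $\beta_0\Delta^{2k}$ and $\beta_0'\Delta^{2m}$ to be conjugate in $B_n$, and projecting by $\pi$ would make $[f]$ and $[g]$ conjugate in $MCG(D_n)$, contradicting the hypothesis.

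Nothing in this argument is especially subtle once Theorem \ref{thm:main2} and Proposition \ref{prop:dehornoyfloor}(5) are in hand; the only minor point is the growth estimate $[\beta_0\Delta^{2k}]_D\to\infty$, which is immediate from the definition and the centrality of $\Delta^2$. The driving mechanism is the interaction between Proposition \ref{prop:dehornoyfloor}(5) and $\pi$: a large Dehornoy floor rigidifies the braid representative, and the braid representative in turn determines the conjugacy class of $[f]$ in $MCG(D_n)$, so two non-conjugate mapping classes can only produce coincident closures via braids of small Dehornoy floor.
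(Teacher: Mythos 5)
Your proof is correct and takes essentially the same route as the paper: hyperbolicity of every closure comes from Theorem \ref{thm:main2}, and both infinitude of $P([f])$ and finiteness of $P([f])\cap P([g])$ come from the uniqueness of closed $n$-braid representatives in Proposition \ref{prop:dehornoyfloor}(5) together with projecting conjugacy in $B_n$ to conjugacy in $MCG(D_n)$. The only difference is that you make explicit several details the paper leaves implicit --- the fiber structure $\pi^{-1}([f])=\{\beta_0\Delta^{2k}\mid k\in\mathbb{Z}\}$, the growth $[\beta_0\Delta^{2k}]_D\to\infty$ via centrality of $\Delta^2$ and left-invariance, and the exponent-sum argument ruling out conjugacy of $\beta_0\Delta^{2k}$ and $\beta_0\Delta^{2k'}$ for $k\neq k'$ --- and these are all accurate.
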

\begin{proof}[Proof of corollary \ref{cor:c1}]
    Since $P([f])$ consists of pseudo-Anosov braids with Dehornoy floor larger than three, Theorem \ref{thm:main2} shows their closures are all hyperbolic knots. 
Proposition \ref{prop:dehornoyfloor} $(6)$ says braids having sufficiently large Dehornoy floor is a unique closed representative of its closure in braid index $n$, therefore $P([f])$ consists of infinite number of distinct hyperbolic knots and $P([f])$ and $P([g])$ have finite intersection unless $[g]$ is conjugate to $[f]$.
\end{proof}

\begin{rem}
   Since every hyperbolic knots are represented as a closure of pseudo-Anosov braids, one might expects our construction of hyperbolic knots produces {\it all} hyperbolic knots; That is, for every hyperbolic knots, there exists pseudo-Anosov mapping class $[f]$ such that $K \in P([f])$. Unfortunately, this too optimistic conjecture is not true.
   For example, we can prove figure-eight knot, the most simple hyperbolic knots, do not appear as an element of $P([f])$. More generally, we can show genus one hyperbolic knots cannot be constructed from our method. This is a corollary of results of our subsequent paper \cite{i}.
\end{rem}

 Finally, we state a special case of theorem \ref{thm:main2} which simplifies the situation.

\begin{cor}
\label{cor:c2}
Let $p$ be a prime and $\beta \in B_{p}$ be a braid whose closure is a knot and $[\beta]_{D}\geq 3$.
Then $\widehat{\beta}$ is a hyperbolic knot if and only if $\beta$ is non-periodic.
\end{cor}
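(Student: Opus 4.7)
The plan is to reduce the statement directly to Theorem \ref{thm:main2} by ruling out the reducible case under the primality hypothesis. Theorem \ref{thm:main2} already gives, when $[\beta]_{D} \geq 3$, the equivalences periodic $\Leftrightarrow$ torus knot, reducible $\Leftrightarrow$ satellite knot, and pseudo-Anosov $\Leftrightarrow$ hyperbolic knot. So what remains is to show that no braid $\beta \in B_{p}$ whose closure is a knot can be reducible when $p$ is prime. Granting that, ``$\beta$ non-periodic'' collapses to ``$\beta$ pseudo-Anosov,'' which is precisely the hyperbolic case.

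First I would extract the combinatorial consequence of $\widehat{\beta}$ being a knot: the permutation $\pi_{\beta} \in S_{p}$ induced on the $p$ punctures of $D_{p}$ must be a single $p$-cycle, since the number of components of $\widehat{\beta}$ equals the number of cycles of $\pi_{\beta}$. As $p$ is prime, this $p$-cycle admits no nontrivial invariant subset of $\{1,\ldots,p\}$; the only $\pi_{\beta}$-invariant subsets are $\emptyset$ and the whole set.

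Next I would assume toward contradiction that $\beta$ is reducible and fix an essential $\beta$-invariant $1$-submanifold $C \subset D_{p}$. Among the components of $C$ I would single out the \emph{outermost} ones $c_{1},\ldots,c_{k}$, meaning those whose bounded punctured subdisks are not contained in the subdisk of any other component. These subdisks are pairwise disjoint, and any homeomorphism representing $\beta$ and preserving $C$ setwise must also permute the outermost family, since outermostness is determined by the nesting structure of $C$. Pick one $\beta$-orbit of size $k_{0}$ within this family; because a homeomorphism maps bounded subdisks to bounded subdisks, every curve in the orbit encloses the same number $m$ of punctures, so the union of the $k_{0}m$ enclosed punctures is $\pi_{\beta}$-invariant. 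By the previous paragraph $k_{0}m \in \{0,p\}$, and essentiality of each $c_{i}$ forces $m \geq 2$, hence $k_{0}m = p$. Primality then leaves only $k_{0}=1,\ m=p$ (a single curve enclosing all punctures, which is therefore boundary-parallel and not essential) or $k_{0}=p,\ m=1$ (which contradicts $m \geq 2$).

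Both cases yield a contradiction, so $\beta$ cannot be reducible, and ``non-periodic'' forces ``pseudo-Anosov.'' Theorem \ref{thm:main2} then identifies this with $\widehat{\beta}$ being hyperbolic, which is the content of the corollary. The main obstacle I anticipate is formal rather than conceptual: one must work with a genuine reducing representative of the mapping class $\pi(\beta)$, not merely a reducing system up to isotopy, so that the outermost subfamily is honestly $\beta$-invariant and the counting argument on punctures is clean. This is standard in the Nielsen--Thurston framework but should be stated explicitly before the outermost argument is applied.
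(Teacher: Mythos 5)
Your proposal is correct and follows essentially the same route as the paper: rule out reducibility of $\beta$ from the primality of $p$ together with $\widehat{\beta}$ being a knot, and then invoke Theorem \ref{thm:main2}. The only differences are cosmetic and in your favor — you actually prove the non-reducibility claim (via the outermost-curve orbit count, where the paper merely asserts ``since $p$ is prime and $\widehat{\beta}$ is a knot, $\beta$ is never reducible''), and you pass directly through the equivalence pseudo-Anosov $\Leftrightarrow$ hyperbolic in Theorem \ref{thm:main2}(3), whereas the paper takes a slightly longer detour through Proposition \ref{prop:nbdsatellite} to exclude the satellite case before concluding.
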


\begin{proof}[Proof of corollary \ref{cor:c2}]
   Since $p$ is prime and $\widehat{\beta}$ is a knot, $\beta$ is never reducible braid.
   Assume $\widehat{\beta}$ is satellite. Then proposition \ref{prop:nbdsatellite}, $\widehat{\beta}$ is braided-satellite, so theorem \ref{thm:main2} means $\beta$ is reducible, which contradicts above. Thus, under the assumption of corollary, $\widehat{\beta}$ is either hyperbolic or torus knot and $\beta$ is either periodic or pseudo-Anosov, so theorem \ref{thm:main2} establishes the result. 
\end{proof}

   Although there is an algorithm to determine Nielsen-Thurston classification, it requires exponential times with respect to word length of braid. On the other hand, since to determine given braids are periodic or not is easy and it requires only quadratic times, corollary \ref{cor:c2} produces very many hyperbolic knots explicitly and rapidly. In particular, we conclude that for prime $p$, closure of braids $\beta \in B_{p}$ satisfying $[\beta]_{D} \geq 2$ are almost all hyperbolic knots.

\end{document}